\newtheorem{theorem}{Theorem}
\newtheorem{lemma}[theorem]{Lemma}
\newtheorem{proposition}[theorem]{Proposition}
\newtheorem{claim}[theorem]{Claim}
\theoremstyle{remark}
\newtheorem{definition}[theorem]{\sc Definition}
\newtheorem{example}[theorem]{\sc Example}
\newtheorem*{acknowledgment}{\sc Acknowledgment}
\numberwithin{equation}{section}
\begin{document}

\title[Classification of toric log del Pezzo surfaces]
{Classification of toric log del Pezzo surfaces with few singular points}

\author{Yusuke Suyama}
\address{Department of Mathematics, Graduate School of Science, Osaka University,
1-1 Machikaneyama-cho, Toyonaka, Osaka 560-0043 JAPAN}
\email{y-suyama@cr.math.sci.osaka-u.ac.jp}

\subjclass[2010]{Primary 14M25; Secondary 14Q10, 52B20.}

\keywords{toric geometry, del Pezzo surface, lattice polygon.}

\date{\today}


\begin{abstract}
We give a classification of toric log del Pezzo surfaces with two or three singular points.
\end{abstract}

\maketitle

\section{Introduction}

A normal projective surface is called a {\it log del Pezzo surface}
if it has at worst log-terminal singularities (that is, quotient singularities)
and its anticanonical divisor is a $\mathbb{Q}$-Cartier ample divisor.
Log del Pezzo surfaces have been extensively studied
and many results are known (for example \cite{N1, N2, N3, AN, N, FY}).

An $n$-dimensional {\it toric variety} is a normal variety $X$ over $\mathbb{C}$
containing the algebraic torus $(\mathbb{C}^*)^n$ as an open dense subset,
such that the natural action of $(\mathbb{C}^*)^n$ on itself extends to an action on $X$.
There is a one-to-one correspondence between toric log del Pezzo surfaces
and certain lattice polygons, called {\it LDP-polygons} (see Section 2).
LDP-polygons were introduced by Dais--Nill \cite{DN}
and the classification of toric log del Pezzo surfaces
was also studied by several researchers.
Dais \cite{D07, D09} classified toric log del Pezzo surfaces
of Picard number one and index at most three,
and Kasprzyk--Kreuzer--Nill \cite{KKN} gave two independent algorithms
that classify all toric log del Pezzo surfaces.

In this paper, we focus on the number of singular points on toric log del Pezzo surfaces.
It is well known that there are exactly five nonsingular toric del Pezzo surfaces.
On the other hand,
in the general case, Belousov \cite{B1, B2} proved that a log del Pezzo surface of Picard number one
has at most four singular points,
and Kojima \cite{K} classified log del Pezzo surfaces of Picard number one
with unique singular points.
Recently, Dais classified all toric log del Pezzo surfaces with unique singular points:

\begin{theorem}[{\cite[Theorem 1.4]{D17}}]\label{Dais}
Let $Q$ be an $LDP$-polygon.
Then the associated toric log del Pezzo surface $X_Q$ has exactly one singular point
if and only if there exists a positive integer $p$
such that $Q$ is equivalent to one of the following:
\begin{enumerate}
\item $\mathrm{conv}\left\{\left(\begin{array}{c} 1 \\ -1 \end{array}\right),
\left(\begin{array}{c} p \\ 1 \end{array}\right),
\left(\begin{array}{c} -1 \\ 0 \end{array}\right)\right\}$.
\item $\mathrm{conv}\left\{\left(\begin{array}{c} 1 \\ -1 \end{array}\right),
\left(\begin{array}{c} p \\ 1 \end{array}\right),
\left(\begin{array}{c} p-1 \\ 1 \end{array}\right),
\left(\begin{array}{c} -1 \\ 0 \end{array}\right)\right\}$.
\item $\mathrm{conv}\left\{\left(\begin{array}{c} 1 \\ -1 \end{array}\right),
\left(\begin{array}{c} p \\ 1 \end{array}\right),
\left(\begin{array}{c} p-1 \\ 1 \end{array}\right),
\left(\begin{array}{c} -1 \\ 0 \end{array}\right),
\left(\begin{array}{c} 0 \\ -1 \end{array}\right)\right\}$.
\end{enumerate}
\end{theorem}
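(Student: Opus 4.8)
The plan is to reformulate the singular‑point condition combinatorially, cut $Q$ down to a minimal polygon by contracting torus‑invariant $(-1)$‑curves, identify that polygon, and then recover $Q$ by reversing the contractions. First, since a complete toric surface is nonsingular in codimension one, the singular points of $X_Q$ are exactly the fixed points attached to the non‑smooth two‑dimensional cones of the spanning fan, and distinct cones give distinct points; writing $v_1,\dots,v_d$ for the vertices of $Q$ counterclockwise, the cone on consecutive $v_i,v_{i+1}$ is smooth precisely when $\det(v_i,v_{i+1})=1$ (counterclockwise order forces all these determinants positive). Thus ``$X_Q$ has exactly one singular point'' means exactly one consecutive pair of vertices has determinant $\ge 2$. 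Given this, ``$\Leftarrow$'' is a direct verification: for each of the three families one checks that the listed points form an $LDP$‑polygon for every $p\ge 1$ and that the unique determinant exceeding $1$ equals $p+1$; this is routine.

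For ``$\Rightarrow$'', reindex so that $\mathrm{cone}(v_d,v_1)$ is the unique non‑smooth cone: $\det(v_i,v_{i+1})=1$ for $1\le i\le d-1$ and $\det(v_d,v_1)=k\ge 2$. For $2\le i\le d-1$ we have $v_{i-1}+v_{i+1}=a_iv_i$ with $a_i=\det(v_{i-1},v_{i+1})\in\mathbb{Z}$, and convexity at $v_i$ forces $a_i\le 1$; when $a_i=1$ the divisor $D_{v_i}$ is a $(-1)$‑curve lying in the smooth locus. Contracting it gives the toric surface of $\mathrm{conv}(v_1,\dots,\widehat{v_i},\dots,v_d)$, which is again log del Pezzo (contracting a $(-1)$‑curve inside the smooth locus of a log del Pezzo surface yields a log del Pezzo surface, via the projection formula applied to $K_X=\pi^*K_{X'}+C$) and still has exactly one singular point, since $\mathrm{cone}(v_d,v_1)$ is untouched. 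Iterating while some interior $a_i$ equals $1$ — each step strictly lowers $d$, so the process terminates — we arrive at an $LDP$‑polygon $\widetilde Q$ with the same $k$, exactly one singular point, and $a_i\le 0$ for all $2\le i\le d-1$.

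I claim $\widetilde Q$ is a triangle. Normalize $v_1=(1,0)$, $v_2=(0,1)$ by a unimodular transformation; then $v_3=(-1,a_2)$ with $a_2\le 0$, and a short sign analysis of the recursion $v_{i+1}=a_iv_i-v_{i-1}$ — using that the $v_i$ appear in strictly increasing angular order around $0$, that $\mathrm{cone}(v_d,v_1)$ is strongly convex with determinant $\ge 2$, and that all interior $a_i\le 0$ — forces $v_4=(0,-1)$ when $d\ge 4$, hence $\det(v_4,v_1)=1$ if $d=4$ and a positive first coordinate for $v_5$ if $d\ge 5$; both are impossible. So $d=3$, and then $\det(v_3,v_1)=-a_2=k$, so $\widetilde Q=\mathrm{conv}\{(1,0),(0,1),(-1,-k)\}$; applying $\begin{pmatrix}1&-1\\-1&0\end{pmatrix}$ carries $\widetilde Q$ onto family (1) with $p=k-1$.

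It remains to run the contractions backwards. The inverse of a contraction is the star subdivision of a cone $\mathrm{cone}(v_i,v_{i+1})$ — necessarily one of the smooth cones, so that only one singular point survives — which adjoins the ray $v_i+v_{i+1}$; for the result to stay an $LDP$‑polygon both $v_i$ and $v_{i+1}$ must remain vertices, and the turn computation shows this is equivalent to $\det(v_{i-1},v_{i+1})\le 0$ and $\det(v_i,v_{i+2})\le 0$. Testing these inequalities against the few smooth cones available: from family (1) every admissible star subdivision produces a polygon equivalent to family (2); from family (2) the only admissible one produces family (3); from family (3) none is admissible. Hence $Q$ is equivalent to one of (1), (2), (3). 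I expect the sign analysis of $\widetilde Q$ in the previous step, together with keeping exact track of which star subdivisions preserve the property of being an $LDP$‑polygon with a single singular point, to be the main work; the remainder is formal.
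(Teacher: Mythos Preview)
The paper does not prove this statement; Theorem~\ref{Dais} is quoted from \cite{D17} and used as a known input (for instance in the proof of Lemma~\ref{SRSRS}). So there is no proof in the paper to compare your attempt against, and it has to be assessed on its own.

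Your overall strategy---contract $(-1)$-curves lying in the smooth locus until none remain, show the resulting polygon is a triangle, then classify the admissible blow-ups---is sound and parallels how the paper organises its own reductions for two and three singular points (Claims~\ref{claim1}--\ref{claim3}). The ``$\Leftarrow$'' direction and the final bookkeeping of which star subdivisions of families (1)--(3) remain LDP-polygons are correct.

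The gap is the ``short sign analysis''. Your assertion that the constraints force $v_4=(0,-1)$ is wrong: with $v_1=(1,0)$, $v_2=(0,1)$ and $a_2,a_3\le 0$ one has $v_4=(-a_3,\,a_2a_3-1)$, which is not pinned down. What does follow is $y_4=a_2a_3-1\ge -1$, hence $\det(v_4,v_1)=-y_4\le 1$, which indeed rules out $d=4$. But for $d\ge 5$ your argument, as written, collapses: for example with $(a_2,a_3,a_4)=(-3,-1,-3)$ the recursion gives $v_3=(-1,-3)$, $v_4=(1,2)$, $v_5=(-2,-3)$ with $y_5=-3\le -2$, and one must actually invoke the angular-order constraint to see that this sequence winds past $2\pi$ before reaching $v_5$ and so does not define a fan. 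You name that ingredient but never use it. A painless way to close the gap, using only material the paper proves independently of Theorem~\ref{Dais}, is to apply Lemma~\ref{RR}: if $|\mathrm{Sing}(X)|=1$ and $d\ge 4$ then there are two consecutive nonsingular cones, so either $X$ is a blow-up at a smooth fixed point (and you descend) or Lemma~\ref{RR} forces at least $d-2\ge 2$ singular cones, a contradiction. This replaces the sign analysis entirely and delivers the reduction to $d=3$.
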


The purpose of this paper is to extend this classification
for those with two or three singular points:

\begin{theorem}\label{2singularities}
Let $Q$ be an $LDP$-polygon.
Then the associated toric log del Pezzo surface $X_Q$ has exactly two singular points
if and only if one of the following conditions is satisfied:
\begin{enumerate}
\item There exist $p, q \in \mathbb{Z}$
such that $p, q \geq 2, \gcd(p, q)=1$ and
$Q$ is equivalent to
\begin{equation*}
\mathrm{conv}\left\{\left(\begin{array}{c} 1 \\ 0 \end{array}\right),
\left(\begin{array}{c} 0 \\ 1 \end{array}\right),
\left(\begin{array}{c} -p \\ -q \end{array}\right)\right\}.
\end{equation*}
\item There exist $p, q, r \in \mathbb{Z}$ such that
$p \leq 1, r \leq \min\{-pq-2, -2, -q-1, q-pq-1\}, \gcd(q, r)=1$ and
$Q$ is equivalent to
\begin{equation*}
\mathrm{conv}\left\{\left(\begin{array}{c} 1 \\ 0 \end{array}\right),
\left(\begin{array}{c} 0 \\ 1 \end{array}\right),
\left(\begin{array}{c} -1 \\ p \end{array}\right),
\left(\begin{array}{c} q \\ r \end{array}\right)\right\}.
\end{equation*}
\item There exist $p, q, r \in \mathbb{Z}$ such that
$p \leq 0, 1 \leq q \leq -r-1, \gcd(q, r)=1$ and
$Q$ is equivalent to
\begin{equation*}
\mathrm{conv}\left\{\left(\begin{array}{c} 1 \\ 0 \end{array}\right),
\left(\begin{array}{c} 0 \\ 1 \end{array}\right),
\left(\begin{array}{c} -1 \\ p+1 \end{array}\right),
\left(\begin{array}{c} -1 \\ p \end{array}\right),
\left(\begin{array}{c} q \\ r \end{array}\right)\right\}.
\end{equation*}
\end{enumerate}
In particular, if $X_Q$ has exactly two singular points,
then the Picard number of $X_Q$ is at most three.
\end{theorem}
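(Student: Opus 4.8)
The plan is to exploit the dictionary of Section~2. Write $v_1,\dots,v_n$ for the vertices of $Q$ in counterclockwise order: the $v_i$ are primitive, $0$ lies in the interior of $Q$, and the spanning fan has maximal cones $\mathrm{cone}(v_i,v_{i+1})$. Such a cone produces a quotient singular point of $X_Q$ at its torus-fixed point exactly when $\{v_i,v_{i+1}\}$ is not a $\mathbb Z$-basis, i.e.\ when $|\det(v_i,v_{i+1})|\ge 2$; call such an edge \emph{singular}, the others \emph{unimodular}. As a complete $\mathbb Q$-factorial toric surface with $n$ rays (its spanning fan is simplicial), $X_Q$ has Picard number $n-2$; hence the last assertion of the theorem follows at once from the classification, the three listed polygons being a triangle, a quadrilateral and a pentagon. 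It remains to prove the two implications.

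For the ``if'' direction I treat the three families in turn: using the displayed inequalities on $p,q,r$ one verifies that the listed points are in convex position in the given cyclic order, that the polygon they span has primitive vertices and contains the origin in its interior (so it is an $LDP$-polygon), and then one computes the $n$ edge-determinants and checks that precisely two have absolute value $\ge 2$. This is a direct, if bookkeeping-heavy, computation; the inequalities in (1)--(3) are tailored exactly so that convexity, interiority of $0$, and the count of singular edges all hold.

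For ``only if'', suppose $X_Q$ has exactly two singular points, so $Q$ has exactly two singular edges and, as $n\ge 3$, at least one unimodular edge. After a $GL_2(\mathbb Z)$-transformation I may assume $[(1,0),(0,1)]$ is an edge and, going counterclockwise, is the first edge of a maximal run of consecutive unimodular edges. The first step is to show the two singular edges are adjacent: then this run is $(1,0),(0,1)=:v_2,v_3,\dots,v_{m+1}$, it carries every vertex except one further vertex $w$, and $Q=\mathrm{conv}\{(1,0),(0,1),v_3,\dots,v_{m+1},w\}$ with $[v_{m+1},w]$ and $[w,(1,0)]$ the two singular edges. For an interior vertex $v_i$ ($2\le i\le m$) of the run the equalities $\det(v_{i-1},v_i)=\det(v_i,v_{i+1})=1$ force $v_{i-1}+v_{i+1}=\beta_iv_i$ for some $\beta_i\in\mathbb Z$, and strict convexity at $v_i$ forces $\beta_i\le 1$; solving recursively gives $v_3=(-1,\beta_2)$, $v_4=(-\beta_3,\beta_2\beta_3-1)$, and so on, so $v_3,\dots,v_{m+1}$ are determined by the integers $\beta_2,\dots,\beta_m\le 1$. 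Writing $w=(a,b)$, the condition $|\det(w,(1,0))|\ge 2$ gives $b\le -2$; imposing in addition $|\det(v_{m+1},w)|\ge 2$, convexity at $(1,0)$ and at $w$, primitivity of $w$, and that $0$ lies strictly inside $Q$ (equivalently every vertex lies strictly on the origin side of each edge's supporting line, e.g.\ strictly below $x+y=1$), one finds these are jointly satisfiable only when $m\le 3$, and that for $m=1,2,3$ the surviving inequalities are, after relabelling the free integers, precisely conditions (1), (2) and (3). Since then $n=m+2\le 5$, the Picard number $n-2$ is at most $3$.

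The crux, I expect, is the two finiteness assertions — that the two singular edges are adjacent and that $m\le 3$ — because a maximal unimodular run can a priori be arbitrarily long (it encodes a continued-fraction expansion), so the bound must be extracted from the interplay of convexity with $0\in\mathrm{int}\,Q$ and with the fact that only two edges are non-unimodular: the vertices lying off the run must, using only their singular edges, restore convexity and carry $\partial Q$ back around the origin, and in the extreme case this forces a determinant identity with coordinates confined to narrow ranges. Already in the non-adjacent quadrilateral case this is an equation $sy-tx=1$ in which the coordinates of $(s,t)$ and $(x,y)$ are bounded by the two singular-edge conditions, a system that has no integer solution; the bound on $m$ is obtained in the same spirit. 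Making this inequality bookkeeping precise, and checking that no sporadic polygon escapes it, is where the real work lies.
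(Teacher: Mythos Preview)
Your outline is structurally close to the paper's, but it is not a proof: you explicitly defer the two assertions that carry all the weight --- that the two singular edges must be adjacent, and that the maximal unimodular run has length $m\le 3$ --- and describe them as ``bookkeeping''. They are not. In the paper these are the content of two lemmas that require genuine case analysis and are proved \emph{before} the theorem.

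For adjacency, the paper shows (Lemma~\ref{SRS}) that if $\sigma_{i-1}$ and $\sigma_{i+1}$ are singular while $\sigma_i$ is nonsingular, then $\sigma_{i-1}\cup\sigma_i\cup\sigma_{i+1}$ already covers more than a half-plane, and hence some further cone $\sigma_j$ with $j\notin\{i-1,i,i+1\}$ must be singular; thus $|\mathrm{Sing}|\ge 3$. This is proved by a nontrivial determinant estimate after normalising $v_i,v_{i+1}$, and it is what rules out the ``non-adjacent quadrilateral'' you mention. Your sketch (``an equation $sy-tx=1$ \dots has no integer solution'') does not supply this argument.

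For the bound on the run, the paper does not argue directly with the $\beta_i$-recursion as you propose. Instead it proves (Lemma~\ref{RR}) that if two consecutive cones $\sigma_i,\sigma_{i+1}$ are nonsingular and $X(\Delta)$ is \emph{not} the blow-up of a toric log del Pezzo at a smooth fixed point, then every other cone is singular. Combined with $|\mathrm{Sing}|=2$, this forces (for $d\ge 5$) the existence of a blow-down, giving an inductive reduction $X_d\to X_{d-1}\to\cdots\to X_4$ (Claim~\ref{claim1}). One then checks by hand (Claim~\ref{claim2}) that from Picard number~$3$ no further blow-up at a smooth fixed point remains log del Pezzo, yielding $d\le 5$. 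Your plan of bounding $m$ by chasing the continued-fraction parameters $\beta_2,\dots,\beta_m$ directly is plausible in spirit but is a different argument, and you have not indicated how the single extra vertex $w$ forces the run to terminate after three steps; without Lemma~\ref{RR} or an equivalent, this is the missing idea.

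Finally, your ``if'' direction also needs one nontrivial check: in case~(2) the listed inequalities do not visibly include $f(4)\ge 1$, and the paper spends a short case analysis ($q\ge 1$ versus $q\le -1$) to derive it from the others. You should not wave this off as pure bookkeeping either.
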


\begin{theorem}\label{3singularities}
Let $Q$ be an $LDP$-polygon.
Then the associated toric log del Pezzo surface $X_Q$ has exactly three singular points
if and only if one of the following conditions is satisfied:
\begin{enumerate}
\item The Picard number of $X_Q$ is at most two.
\item There exist $p, q, r, s, t \in \mathbb{Z}$ such that
\begin{align*}
p & \leq 1,\\
r & \leq \min\{-1, -pq-2, q-pq-1, -pq+qt-rs+ps+t-1\},\\
t & \leq \min\{-2, -s-1, qt-rs+r-1\},\\
2 & \leq qt-rs,
\end{align*}
$\gcd(q, r)=\gcd(s, t)=1$ and $Q$ is equivalent to
\begin{equation*}
\mathrm{conv}\left\{\left(\begin{array}{c} 1 \\ 0 \end{array}\right),
\left(\begin{array}{c} 0 \\ 1 \end{array}\right),
\left(\begin{array}{c} -1 \\ p \end{array}\right),
\left(\begin{array}{c} q \\ r \end{array}\right),
\left(\begin{array}{c} s \\ t \end{array}\right)\right\}.
\end{equation*}
\item $X_Q$ is isomorphic to the blow-up of a toric log del Pezzo surface of Picard number three
at one nonsingular $(\mathbb{C}^*)^2$-fixed point.
\end{enumerate}
In particular, if $X_Q$ has exactly three singular points,
then the Picard number of $X_Q$ is at most four.
\end{theorem}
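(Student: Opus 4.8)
\emph{Proof strategy.}
The plan is to translate the problem into combinatorics of the polygon, run a toric minimal‑model‑type reduction, and then finish off the few remaining low–Picard‑number cases by hand. Write $v_1,\dots,v_n$ for the vertices of $Q$ in counterclockwise order; they are primitive, and $X_Q$ is the toric surface of the fan spanned by the faces of $Q$. Since a point of a toric surface is singular only if it is torus‑fixed, and the fixed point of the cone $\sigma_i=\mathrm{cone}(v_i,v_{i+1})$ is singular exactly when $\det(v_i,v_{i+1})\ge 2$ (orienting $\mathbb{R}^2$ so these determinants are positive), the hypothesis says that exactly three of the $n$ edges of $Q$ are \emph{singular} in this sense; call the other $n-3$ edges \emph{smooth}. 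As $X_Q$ is a $\mathbb{Q}$‑factorial complete toric surface with $n$ rays, $\rho(X_Q)=n-2$. We also use the fact that a complete fan in $\mathbb{R}^2$ defines a toric log del Pezzo surface if and only if every ray generator is a vertex of the convex hull of all ray generators; this is the only thing to be monitored when performing toric blow‑ups. Thus the theorem comes down to: (a) proving $n\le 6$, and (b) classifying the polygons with exactly three singular edges for each $n\in\{3,4,5,6\}$.

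For (a) the key local fact is that if $v_k$ is an interior vertex of a maximal chain of smooth edges, then $\{v_{k-1},v_k\}$ and $\{v_k,v_{k+1}\}$ are lattice bases, so $v_{k-1}+v_{k+1}=a_kv_k$ for a unique $a_k\in\mathbb{Z}$, and convexity of $Q$ at $v_k$ (computed in suitable coordinates, it reads $2-a_k>0$) forces $a_k\le 1$. When $a_k=1$ one has $v_k=v_{k-1}+v_{k+1}$, and deleting the ray $v_k$ produces a smaller polygon $Q'$ which is still an LDP‑polygon (removing a vertex from a convex polygon leaves the others extreme and keeps the origin interior), with one fewer ray and still exactly three singular edges, and $X_Q$ is the blow‑up of $X_{Q'}$ at the nonsingular fixed point corresponding to $\mathrm{cone}(v_{k-1},v_{k+1})$. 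Iterating these contractions, we reach a minimal polygon $Q^{*}$ with three singular edges in which every interior vertex of a smooth chain has $a_k\le 0$. The crux is then a direct convex‑position analysis: an interior vertex with $a_k\le 0$ is severely constrained, two consecutive such vertices force the boundary to wind back on itself, and combining this with $0\in\mathrm{int}(Q^{*})$ and the three singular edges bounds the number of vertices of $Q^{*}$. One must also bound how many contractions were made, equivalently how many reverse blow‑ups are possible while remaining log del Pezzo: blowing up a smooth cone (normalized to $\mathrm{cone}((1,0),(0,1))$) stays log del Pezzo only if the outer neighbours of its two rays satisfy two explicit linear conditions, and for a polygon with exactly three singular edges this can occur only a bounded number of times. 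Assembling these facts yields $n\le 6$, hence $\rho(X_Q)\le 4$, which is the final assertion.

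For (b) we split by $\rho=n-2$. If $\rho\le 2$, i.e.\ $n\in\{3,4\}$, we are in case (1). If $\rho=3$, then $Q$ is already minimal and has exactly two smooth and three singular edges; a short convex‑position argument shows the two smooth edges must be adjacent, so after a $GL_2(\mathbb{Z})$‑transformation we may take them to form the chain $(1,0),(0,1),(-1,p)$. Writing the remaining two vertices as $(q,r)$ and $(s,t)$, the requirements that $Q$ be convex with the origin in its interior, have primitive vertices (which forces $\gcd(q,r)=\gcd(s,t)=1$), and have its three other edges singular translate precisely into the system of inequalities of case (2); extracting exactly this system — with no redundant and no missing constraint — is the computational core. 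If $\rho=4$, the reduction yields a single blow‑down to a log del Pezzo surface of Picard number three carrying the same three singular points, so $X_Q$ is as in case (3). Conversely, each family has exactly three singular points: for (2) by the determinant computation just used, and for (1) and (3) because a blow‑up at a nonsingular fixed point leaves the singular locus unchanged.

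I expect the main obstacle to be the quantitative part of the second paragraph — simultaneously bounding the minimal polygons $Q^{*}$ and the number of log del Pezzo–preserving blow‑ups — which is what yields the sharp bound $n\le 6$ and, in particular, rules out seven‑vertex LDP‑polygons with three singular edges as well as six‑vertex ones all of whose smooth edges are isolated. The other laborious step is the $\rho=3$ bookkeeping that produces exactly the inequalities listed in case (2).
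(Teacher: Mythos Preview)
Your overall architecture matches the paper's: reduce by toric blow-downs at smooth fixed points, classify the low--Picard-number cases, and bound the number of blow-ups. The paper's inductive reduction (its Claim~\ref{claim3}) is exactly your contraction of interior vertices of smooth chains, and the final step (showing that no further blow-up of a $\rho=4$ example remains log del Pezzo) is done just as you suggest, by checking the three smooth cones explicitly.

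The genuine gap is the step you describe as ``a short convex-position argument'' showing that for $d=5$ the two smooth edges must be adjacent, together with its $d=6$ analogue ruling out the alternating pattern. Your reduction only contracts \emph{interior} vertices of smooth chains; when every smooth edge is isolated there are no such vertices, the integers $a_k$ are not even defined, and your $a_k\le 0$ analysis has no grip. The paper handles precisely this configuration via two lemmas you are missing. The first (Lemma~\ref{SRS}) shows that a single smooth cone flanked by two singular cones forces those three cones together to cover strictly more than a half-plane; this immediately kills the $d\ge 6$ alternating patterns (two disjoint such triples would overfill $\mathbb{R}^2$) and drives the induction for $d\ge 7$. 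The second (Lemma~\ref{SRSRS}) rules out the $d=5$ alternating pattern by a genuinely non-short three-case computation that appeals to the already-proved classifications for one and two singular points; it is not a consequence of convexity alone. So your skeleton is right, but the load-bearing geometric fact---that an isolated smooth edge between two singular ones consumes more than half the plane---is absent, and without it neither the $d=5$ adjacency claim nor the bound $d\le 6$ follows from what you have written.

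One minor correction: for $\rho=3$ the polygon $Q$ need not be minimal in your sense. In the normalized form $v_1=(1,0)$, $v_2=(0,1)$, $v_3=(-1,p)$ the interior vertex $v_2$ has $a_2=p$, and $p=1$ is permitted by the inequalities in case~(2), so some $d=5$ examples do blow down to $\rho=2$. The point is rather that \emph{every} $d=5$ example, minimal or not, has its two smooth edges adjacent---and that is exactly the content of Lemma~\ref{SRSRS}.
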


The structure of the paper is as follows:
In Section 2, we collect basic results from toric geometry.
In Sections 3 and 4,
we prove Theorems \ref{2singularities} and \ref{3singularities}, respectively.

\begin{acknowledgment}
This work was supported by JSPS KAKENHI Grant Number JP18J00022.
\end{acknowledgment}

\section{Toric log del Pezzo surfaces}

We fix a notation and recall some basic facts from toric geometry which will be used in this paper,
see \cite{CLS} for details.
Let $\Delta$ be a complete fan in $\mathbb{R}^2$.
We list the primitive generators of one-dimensional cones in $\Delta$
as $v_1, \ldots, v_d$ in counterclockwise order around the origin in $\mathbb{R}^2$,
and we define $v_0=v_d$ and $v_{d+1}=v_1$.
For $i=1, \ldots, d$, we write $v_i=\left(\begin{array}{c} x_i \\ y_i \end{array}\right)$.
We denote by $X(\Delta)$
the associated complete toric surface.
Since $\Delta$ is simplicial, the Picard number $\rho(X(\Delta))$ of $X(\Delta)$ equals $d-2$.

For $i=1, \ldots, d$, we denote by $D_i$ the torus-invariant divisor
on $X(\Delta)$ corresponding to $v_i$,
and we define $\sigma_i=\mathbb{R}_{\geq 0}v_i+\mathbb{R}_{\geq 0}v_{i+1}$.
For a two-dimensional cone $\sigma$ in $\Delta$,
we denote by $\mathrm{orb}(\sigma)$ the corresponding $(\mathbb{C}^*)^2$-fixed point
of $X(\Delta)$.
Then $\mathrm{orb}(\sigma_i)$ is nonsingular if and only if
$\det(v_i, v_{i+1})=1$.
The set $\mathrm{Sing}(X(\Delta))$ of singular points of $X(\Delta)$
is $\{\mathrm{orb}(\sigma_i); 1 \leq i \leq d, \det(v_i, v_{i+1}) \geq 2\}$.

\begin{definition}
Let $\Delta$ be a complete fan in $\mathbb{R}^2$.
We define the map $f:\{1, \ldots, d\} \rightarrow \mathbb{Z}$
by $f(i)=\det(v_{i-1}, v_i)+\det(v_i, v_{i+1})+\det(v_{i+1}, v_{i-1})$.
\end{definition}

\begin{proposition}\label{intersection number}
Let $\Delta$ be a complete fan in $\mathbb{R}^2$.
Then the toric surface $X(\Delta)$ is log del Pezzo if and only if
$f(i) \geq 1$ for every $i=1, \ldots, d$.
\end{proposition}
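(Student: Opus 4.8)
The plan is to reduce the log del Pezzo property of $X(\Delta)$ to the ampleness of its anticanonical divisor and then to reinterpret ampleness as the numerical condition $f(i)\geq 1$ via intersection theory on $X(\Delta)$. Since $\Delta$ is a complete fan in $\mathbb{R}^2$, all of its cones are simplicial, so $X(\Delta)$ is a complete (hence projective) normal surface which is $\mathbb{Q}$-factorial and has only quotient (in particular log-terminal) singularities. Consequently every Weil divisor on $X(\Delta)$ is $\mathbb{Q}$-Cartier; in particular $-K_{X(\Delta)}=D_1+\cdots+D_d$ is $\mathbb{Q}$-Cartier, and $X(\Delta)$ is log del Pezzo if and only if $-K_{X(\Delta)}$ is ample. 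By the toric Nakai--Moishezon criterion---equivalently, since the classes of the torus-invariant curves $D_1,\dots,D_d$ span the Mori cone of the complete toric surface $X(\Delta)$, by Kleiman's criterion---the divisor $-K_{X(\Delta)}$ is ample if and only if $(-K_{X(\Delta)})\cdot D_i>0$ for every $i=1,\dots,d$.

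It remains to compute $(-K_{X(\Delta)})\cdot D_i$. On the $\mathbb{Q}$-factorial complete toric surface $X(\Delta)$ one has $D_i\cdot D_j=0$ whenever $v_i$ and $v_j$ do not span a two-dimensional cone, and $D_i\cdot D_{i+1}=1/\det(v_i,v_{i+1})$, the reciprocal of the multiplicity of $\sigma_i$, which is a positive integer because $v_1,\dots,v_d$ are ordered counterclockwise. For $D_i^2$ I would combine the linear equivalence $\sum_{j=1}^d\langle m,v_j\rangle D_j\sim 0$, valid for every $m\in\mathbb{Z}^2$, with the vector identity $\det(v_i,v_{i+1})v_{i-1}+\det(v_{i-1},v_i)v_{i+1}=\det(v_{i-1},v_{i+1})v_i$ (proved by comparing determinants of both sides against $v_{i-1}$ and against $v_i$): intersecting the linear equivalence with $D_i$, only the three terms $j=i-1,i,i+1$ survive, and after clearing denominators one obtains
\[
D_i^2=-\frac{\det(v_{i-1},v_{i+1})}{\det(v_{i-1},v_i)\det(v_i,v_{i+1})}.
\]
Adding the three contributions $D_{i-1}\cdot D_i$, $D_i^2$, $D_{i+1}\cdot D_i$ and multiplying by the positive integer $\det(v_{i-1},v_i)\det(v_i,v_{i+1})$ gives
\[
\det(v_{i-1},v_i)\det(v_i,v_{i+1})\cdot\bigl((-K_{X(\Delta)})\cdot D_i\bigr)=\det(v_{i-1},v_i)+\det(v_i,v_{i+1})-\det(v_{i-1},v_{i+1})=f(i).
\]

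Combining the two steps, $(-K_{X(\Delta)})\cdot D_i>0$ holds if and only if $f(i)>0$, and since $f(i)$ is an integer this is the same as $f(i)\geq 1$; hence $X(\Delta)$ is log del Pezzo if and only if $f(i)\geq 1$ for all $i=1,\dots,d$. The only step that is not an immediate consequence of the standard toric dictionary is the self-intersection formula for $D_i$ in the possibly singular situation, so I expect that to be the part requiring the most care; I would also record at the outset that any complete fan in $\mathbb{R}^2$ has $d\geq 3$, which guarantees that $\sigma_{i-1}$ and $\sigma_i$ are genuine two-dimensional cones and that the computation above is legitimate for every index $i$.
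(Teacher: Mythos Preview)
Your proof is correct and follows essentially the same approach as the paper: reduce the log del Pezzo condition to ampleness of $-K_{X(\Delta)}$, test ampleness via positivity of $(-K_{X(\Delta)})\cdot D_i$, and compute this intersection number as $f(i)/(\det(v_{i-1},v_i)\det(v_i,v_{i+1}))$ using the linear relation among $v_{i-1},v_i,v_{i+1}$. The paper obtains the intersection formula by citing \cite{CLS} directly, whereas you spell out the derivation of $D_i^2$ from the standard linear equivalences; otherwise the arguments coincide.
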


\begin{proof}
The toric surface $X(\Delta)$ is log del Pezzo if and only if the intersection number
$(-K_{X(\Delta)} \cdot D_i)$ of the anticanonical divisor $-K_{X(\Delta)}$ with $D_i$
is positive for every $i=1, \ldots, d$ (see \cite[Theorem 6.3.13]{CLS}).
We see that
\begin{equation*}
\frac{1}{\det(v_{i-1}, v_i)}v_{i-1}+\frac{1}{\det(v_i, v_{i+1})}v_{i+1}
+\frac{\det(v_{i+1}, v_{i-1})}{\det(v_{i-1}, v_i)\det(v_i, v_{i+1})}v_i=0
\end{equation*}
for every $i=1, \ldots, d$.
Hence
\begin{align*}
(-K_{X(\Delta)} \cdot D_i)&=\frac{1}{\det(v_{i-1}, v_i)}+\frac{1}{\det(v_i, v_{i+1})}
+\frac{\det(v_{i+1}, v_{i-1})}{\det(v_{i-1}, v_i)\det(v_i, v_{i+1})}\\
&=\frac{f(i)}{\det(v_{i-1}, v_i)\det(v_i, v_{i+1})}
\end{align*}
(see \cite[Theorem 8.2.3 and Proposition 6.4.4]{CLS}).
Since $\det(v_{i-1}, v_i)\det(v_i, v_{i+1})>0$, the assertion holds.
\end{proof}

A convex lattice polygon $Q \subset \mathbb{R}^2$ is called an {\it LDP-polygon}
if it contains the origin in its interior and every vertex is primitive.
Two LDP-polygons $Q$ and $Q'$ are said to be {\it equivalent}
if there exists a $2 \times 2$ unimodular matrix that transforms $Q$ into $Q'$.
For an edge $F$ of $Q$, we define a rational strongly convex polyhedral cone
$\sigma_F=\{\lambda x \mid \lambda \geq 0, x \in F\} \subset \mathbb{R}^2$.
Then the set $\Delta_Q$ of all such cones and their faces
forms a complete fan in $\mathbb{R}^2$.
We define $X_Q$ to be the associated toric surface $X(\Delta_Q)$.

\begin{proposition}[{\cite[Proposition 4.2]{D17}}]\label{LDP-polygon}
The correspondence $Q \mapsto X_Q$
induces a bijection between
equivalence classes of LDP-polygons
and isomorphism classes of toric log del Pezzo surfaces.
\end{proposition}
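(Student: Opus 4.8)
The plan is to prove the statement in three steps: (i) $X_Q$ is a toric log del Pezzo surface for every LDP-polygon $Q$; (ii) every toric log del Pezzo surface is isomorphic to some $X_Q$; (iii) $X_Q \cong X_{Q'}$ forces $Q$ and $Q'$ to be equivalent.

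For (i), let $Q$ have vertices $v_1, \ldots, v_d$ listed counterclockwise. Since $Q$ is an LDP-polygon, $0 \in \mathrm{int}(Q)$, so $\Delta_Q$ is a complete fan whose primitive ray generators are exactly $v_1, \ldots, v_d$; thus Proposition \ref{intersection number} applies and it suffices to show $f(i) \geq 1$ for all $i$. Expanding the determinants yields the identity
\begin{equation*}
f(i) = \det(v_{i-1}, v_i) + \det(v_i, v_{i+1}) + \det(v_{i+1}, v_{i-1}) = 2\,\mathrm{Area}(v_{i-1}, v_i, v_{i+1}),
\end{equation*}
the signed area of the triangle on those three vertices; as $v_1, \ldots, v_d$ are the vertices of the convex polygon $Q$ listed counterclockwise, that triangle is nondegenerate and positively oriented, so $f(i)$ is a positive integer.

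For (ii), let $X = X(\Delta)$ be a toric log del Pezzo surface (it is projective, so $\Delta$ is a complete fan in $\mathbb{R}^2$) with primitive ray generators $v_1, \ldots, v_d$ in counterclockwise order, and put $Q = \mathrm{conv}\{v_1, \ldots, v_d\}$. Completeness of $\Delta$ means the $v_i$ lie in no closed half-plane through the origin, hence $0 \in \mathrm{int}(Q)$. By Proposition \ref{intersection number}, $f(i) \geq 1$ for all $i$, so by the identity above the closed polygonal path $v_1 \to \cdots \to v_d \to v_1$ turns left at every vertex; since moreover the $v_i$ wind exactly once around the interior point $0$ (the two-dimensional cones of $\Delta$ being strongly convex), a local-to-global convexity argument shows this path is the boundary of a convex polygon. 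Consequently every $v_i$ is a vertex of $Q$ and every consecutive pair $v_i, v_{i+1}$ spans an edge of $Q$, so $\Delta_Q = \Delta$, i.e.\ $X_Q = X$; and $Q$ is an LDP-polygon since its vertices are primitive.

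For (iii), a unimodular matrix carrying $Q$ onto $Q'$ carries $\Delta_Q$ onto $\Delta_{Q'}$ and induces $X_Q \cong X_{Q'}$, while conversely an isomorphism $X_Q \cong X_{Q'}$ of toric surfaces corresponds to some $A \in \mathrm{GL}_2(\mathbb{Z})$ with $A(\Delta_Q) = \Delta_{Q'}$; by (ii) the primitive ray generators of $\Delta_Q$ and of $\Delta_{Q'}$ are precisely the vertex sets of $Q$ and $Q'$, so $A(Q) = Q'$ and $Q$, $Q'$ are equivalent. The crux is step (ii): one must use the log del Pezzo inequality $f(i) \geq 1$ to reconstruct the fan from the polygon, ruling out the possibility that some primitive generator is absorbed into the interior of $Q$ or of one of its edges. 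Steps (i) and (iii) are then essentially formal, given the area identity and the standard dictionary between toric morphisms and maps of fans.
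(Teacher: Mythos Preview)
The paper does not prove this proposition at all; it is stated with a citation to \cite[Proposition 4.2]{D17} and no argument is given. So there is no ``paper's own proof'' to compare against---your proposal supplies a self-contained argument for a result the paper imports from the literature.

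Your argument is essentially correct. The identity $f(i)=\det(v_i-v_{i-1},\,v_{i+1}-v_{i-1})$ (twice the signed area of the triangle $v_{i-1}v_iv_{i+1}$) is right and makes step (i) immediate. In step (ii) the crucial point is exactly the one you identify: $f(i)\geq 1$ rules out $v_i$ lying on or inside an edge of $\mathrm{conv}\{v_1,\ldots,v_d\}$. Your ``local-to-global convexity argument'' is a little compressed; one clean way to say it is that since the rays $\mathbb{R}_{\geq 0}v_i$ go once around the origin, the closed polygonal path $v_1\to\cdots\to v_d\to v_1$ is star-shaped with respect to $0$ and hence simple, and a simple closed polygon with a strict left turn ($f(i)>0$) at every vertex is convex with exactly those points as its vertices.

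One point to be careful about in step (iii): you pass from an isomorphism $X_Q\cong X_{Q'}$ to a matrix $A\in\mathrm{GL}_2(\mathbb{Z})$ carrying $\Delta_Q$ to $\Delta_{Q'}$. This is immediate if ``isomorphism'' means torus-equivariant isomorphism, but if it means isomorphism of abstract varieties (as the statement is most naturally read), then you are using the nontrivial fact that two complete toric surfaces isomorphic as varieties are already isomorphic as toric varieties. That is true, but it deserves a reference or a remark rather than being left implicit.
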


\begin{example}
Let $p$ be a positive integer and let
\begin{equation*}
v_1=\left(\begin{array}{c} 1 \\ 0 \end{array}\right),
v_2=\left(\begin{array}{c} 0 \\ 1 \end{array}\right),
v_3=\left(\begin{array}{c} -1 \\ -p \end{array}\right).
\end{equation*}
Then the convex hull $Q=\mathrm{conv}\{v_1, v_2, v_3\}$ is an LDP-polygon.
The fan $\Delta_Q$ consists of the cones $\mathbb{R}_{\geq 0}v_1+\mathbb{R}_{\geq 0}v_2,
\mathbb{R}_{\geq 0}v_2+\mathbb{R}_{\geq 0}v_3,
\mathbb{R}_{\geq 0}v_3+\mathbb{R}_{\geq 0}v_1$ and their faces.
The associated toric log del Pezzo surface $X(\Delta_Q)$
is the weighted projective plane $\mathbb{P}(1, 1, p)$
(see \cite[Example 3.1.17]{CLS}).
\end{example}

\section{Proof of Theorem \ref{2singularities}}

In this section, we give a proof of Theorem \ref{2singularities}.
We will use the notation in Section 2 freely.
The following lemmas play key roles in the proof of Theorems \ref{2singularities} and \ref{3singularities}.

\begin{lemma}\label{SRS}
Let $X(\Delta)$ be a toric log del Pezzo surface with $d \geq 4$.
Suppose that there exists $i$ with $1 \leq i \leq d$
such that $\sigma_{i-1}$ and $\sigma_{i+1}$ are singular while $\sigma_i$ is nonsingular.
Then the following hold:
\begin{enumerate}
\item We have $\det(v_{i+2}, v_{i-1}) \geq 2$.
In particular, the cones $\sigma_{i-1}, \sigma_i, \sigma_{i+1}$ cover more than a half-plane.
\item There exists $j \in \{1, \ldots, d\} \setminus \{i-1, i ,i+1\}$
such that $\sigma_j$ is singular.
In particular, $|\mathrm{Sing}(X(\Delta))| \geq 3$.
\end{enumerate}
\end{lemma}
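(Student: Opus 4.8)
The plan is to exploit the log del Pezzo condition from Proposition~\ref{intersection number}, applied at the index $i$ where $\sigma_i$ is the lone nonsingular cone sandwiched between the two singular cones $\sigma_{i-1}$ and $\sigma_{i+1}$. Write $a=\det(v_{i-1},v_i)$, $b=\det(v_{i+1},v_{i+2})$ and note that $\det(v_i,v_{i+1})=1$ because $\sigma_i$ is nonsingular, while $a,b\geq 2$ because $\sigma_{i-1},\sigma_{i+1}$ are singular. After a unimodular change of coordinates I may assume $v_i=\binom{1}{0}$ and $v_{i+1}=\binom{0}{1}$; then $v_{i-1}=\binom{-1}{*}$ with the second coordinate forced by $a$, and $v_{i+2}=\binom{*}{-1}$ with the first coordinate forced by $b$. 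This reduces everything to a finite bookkeeping in these two integers.

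For part~(1), I would compute $f(i)=\det(v_{i-1},v_i)+\det(v_i,v_{i+1})+\det(v_{i+1},v_{i-1})=a+1+\det(v_{i+1},v_{i-1})\geq 1$, so $\det(v_{i+1},v_{i-1})\geq -a$, and similarly from $f(i+1)\geq 1$ one gets $\det(v_{i+2},v_i)\geq -b$. The key identity to invoke is the expansion
\begin{equation*}
\det(v_{i+2},v_{i-1})=\det(v_{i+2},v_i)\det(v_i,v_{i+1})+\det(v_i,v_{i+1})\det(v_{i+1},v_{i-1})-\det(v_{i+2},v_i)\det(v_{i+1},v_{i-1})\cdot(\text{correction}),
\end{equation*}
or more cleanly: in the normalized coordinates, with $v_{i-1}=\binom{-1}{1-a}$ and $v_{i+2}=\binom{1-b}{-1}$, one simply reads off $\det(v_{i+2},v_{i-1})=(1-b)(1-a)+1=ab-a-b+2=(a-1)(b-1)+1\geq 2$ since $a,b\geq 2$. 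The "covers more than a half-plane" claim then follows because $\det(v_{i+2},v_{i-1})\geq 2>0$ means $v_{i-1}$ lies strictly on the positive side of the line through $0$ and $v_{i+2}$; combined with the fact that $v_{i-1},v_i,v_{i+1},v_{i+2}$ are in counterclockwise order, the cones $\sigma_{i-1},\sigma_i,\sigma_{i+1}$ sweep an angle exceeding $\pi$.

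For part~(2), since $d\geq 4$ the remaining generators $v_{i+2},\ldots,v_{i-1}$ (indices mod $d$, at least two of them, since the three cones already cover more than a half-plane but not the full plane) must close up the fan on the far side; I would argue that the union $\sigma_{i+2}\cup\cdots\cup\sigma_{i-2}$ covers less than a half-plane, so at least one of these cones, say $\sigma_j$, has $\det(v_j,v_{j+1})\geq 2$. Concretely, if every such cone were nonsingular then $v_{i+2},\ldots,v_{i-1}$ would form a unimodular chain spanning a convex region of angle less than $\pi$, forcing $\det(v_{i+2},v_{i-1})=1$ (a unimodular chain inside a half-plane has determinant~$1$ between its extreme generators), contradicting part~(1). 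Hence some $\sigma_j$ with $j\notin\{i-1,i,i+1\}$ is singular, and together with $\sigma_{i-1}$ and $\sigma_{i+1}$ this gives $|\mathrm{Sing}(X(\Delta))|\geq 3$.

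The main obstacle I anticipate is making the "unimodular chain inside a half-plane has determinant~$1$ between its endpoints" step fully rigorous: one has to rule out the chain wrapping around, i.e.\ genuinely use that all of $v_{i+2},\ldots,v_{i-1}$ together with the already-placed $v_i,v_{i+1}$ occur in cyclic counterclockwise order and that the three sandwiched cones occupy angle strictly between $\pi$ and $2\pi$, so the complementary cones occupy angle strictly less than $\pi$. Once the angular picture is pinned down, the determinant computation is immediate from Stern--Brocot / continued-fraction properties of unimodular sequences, and part~(2) follows. I would organize the write-up so that part~(1) is the self-contained computation and part~(2) quotes it.
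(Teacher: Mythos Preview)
Your argument for part~(1) has a genuine gap. After normalizing $v_i=\binom{1}{0}$ and $v_{i+1}=\binom{0}{1}$, the conditions $\det(v_{i-1},v_i)=a$ and $\det(v_{i+1},v_{i+2})=b$ pin down only one coordinate of each neighbour: one gets $y_{i-1}=-a$ and $x_{i+2}=-b$, so $v_{i-1}=\binom{c}{-a}$ and $v_{i+2}=\binom{-b}{e}$ with $c,e\in\mathbb{Z}$ still free. Your asserted forms $v_{i-1}=\binom{-1}{1-a}$ and $v_{i+2}=\binom{1-b}{-1}$ are not forced (and in fact are inconsistent with $\det(v_{i-1},v_i)=a$). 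Consequently $\det(v_{i+2},v_{i-1})=ab-ce$ depends on the two undetermined integers, and the clean formula $(a-1)(b-1)+1$ is simply false in general. The inequalities $f(i)\geq 1$ and $f(i+1)\geq 1$ yield only $c\leq a$ and $e\leq b$, which by themselves do not bound $ce$ from above. The paper's proof of~(1) proceeds by a genuine case analysis on the signs of these free coordinates: when at least one is negative a direct estimate gives $\det(v_{i+2},v_{i-1})\geq 3$; when both are positive the bound $\det(v_{i+2},v_{i-1})\geq 1$ comes from the counterclockwise ordering, and ruling out equality requires an extra argument using that $v_{i+2}$ is a \emph{vertex} of the LDP-polygon (not merely the $f$-inequalities).

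Your part~(2) rests on a false lemma: a unimodular chain inside a half-plane need not have determinant~$1$ between its extreme generators. Take $w_1=\binom{1}{1}$, $w_2=\binom{0}{1}$, $w_3=\binom{-1}{1}$: all lie in the upper half-plane, $\det(w_1,w_2)=\det(w_2,w_3)=1$, yet $\det(w_1,w_3)=2$. So assuming all complementary cones are nonsingular does not force $\det(v_{i+2},v_{i-1})=1$. The paper's argument is different and avoids this pitfall: from $\det(v_{i+2},v_{i-1})\geq 2$ one extracts a lattice point $v\in\mathrm{conv}\{0,v_{i+2},v_{i-1}\}\setminus\{0,v_{i+2},v_{i-1}\}$; by convexity of the LDP-polygon this triangle is contained in the union $\bigcup_{j}\mathrm{conv}\{0,v_j,v_{j+1}\}$ over the complementary indices, so $v$ lies in one of these smaller triangles and forces the corresponding $\sigma_j$ to be singular.
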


\begin{proof}
Without loss of generality,
we may assume that $i=2, v_2=\left(\begin{array}{c} -1 \\ 0 \end{array}\right),
v_3=\left(\begin{array}{c} 0 \\ -1 \end{array}\right)$.
Then $y_1 \geq 2$ and $x_4 \geq 2$.

(1) Proposition \ref{intersection number} and $f(2)=x_1+y_1+1$ imply $x_1 \geq -y_1$.
Since $y_1 \geq 2$ and $v_1=\left(\begin{array}{c} x_1 \\ y_1 \end{array}\right)$ is primitive,
we have $x_1 \geq 1-y_1$ and $x_1 \ne 0$.
A similar argument shows that $y_4 \geq 1-x_4$ and $y_4 \ne 0$.

{\it Case 1}. Suppose that $x_1 \leq -1$ or $y_4 \leq -1$.
We may assume that $x_1 \leq -1$.
Then $y_4 \geq 1-x_4$ implies $x_1y_4 \leq x_1(1-x_4)$.
Since $1-x_4 \leq 1-2=-1$ and $x_1 \geq 1-y_1$,
we have $x_1(1-x_4) \leq (1-x_4)(1-y_1)$.
Thus $x_1y_4 \leq (1-x_4)(1-y_1)$.
Therefore $\det(v_4, v_1)=x_4y_1-x_1y_4 \geq x_4y_1-(1-x_4)(1-y_1)=x_4+y_1-1 \geq 3$.

{\it Case 2}. Suppose that $x_1 \geq 1$ and $y_4 \geq 1$.
Then both $v_4$ and $v_1$ are in
$\left\{\left(\begin{array}{c} x \\ y \end{array}\right) \in \mathbb{R}^2; x, y \geq 0\right\}$.
Since $v_1, \ldots, v_d$ are arranged in counterclockwise order,
we must have $\det(v_4, v_1)=x_4y_1-x_1y_4 \geq 1$.
Assume $\det(v_4, v_1)=1$ for contradiction.
Let $Q=\mathrm{conv}\{v_1, \ldots, v_d\}$.
We see that $(x_4+1)/x_1>0$ and
\begin{align*}
&\frac{x_4+1}{x_1}\left(\left(1-\frac{x_4}{x_4+1}\right)v_3+\frac{x_4}{x_4+1}v_1\right) \\
&=\frac{1}{x_1}\left(\begin{array}{c} x_1x_4 \\ -1+x_4y_1 \end{array}\right)
=\frac{1}{x_1}\left(\begin{array}{c} x_1x_4 \\ x_1y_4 \end{array}\right)=v_4.
\end{align*}
Since $v_4$ is a vertex of $Q$,
we must have $(x_4+1)/x_1>1$, so $x_4 \geq x_1$.
The assumption $\det(v_4, v_1)=1$ and $x_4 \geq 2$ imply $x_4-1 \geq x_1$.
A similar argument shows that $y_1-1 \geq y_4$.
Thus $x_1y_4 \leq (x_4-1)(y_1-1)$.
Therefore $1=\det(v_4, v_1)=x_4y_1-x_1y_4 \geq x_4y_1-(x_4-1)(y_1-1)=x_4+y_1-1 \geq 3$,
which is a contradiction.

In every case, we obtain $\det(v_4, v_1) \geq 2$.

(2) If $d=4$, then $\sigma_4$ is a singular cone by (1). Assume $d \geq 5$.
Then by (1), there exists a lattice point
$v \in (\mathrm{conv}\{0, v_4, v_1\} \cap \mathbb{Z}^2) \setminus \{0, v_4, v_1\}$.
Since
\begin{equation*}
\bigcup_{j=4}^d \mathrm{conv}\{0, v_j, v_{j+1}\}
\supsetneq \mathrm{conv}\{0, v_4, v_1\},
\end{equation*}
there exists $j \in \{4, \ldots, d\}$ such that
$v$ is an interior point of $\mathrm{conv}\{0, v_j, v_{j+1}\}$.
In particular, $\sigma_j$ is a singular cone.
\end{proof}

\begin{lemma}\label{RR}
Let $X(\Delta)$ be a singular toric log del Pezzo surface.
Suppose that $X(\Delta)$ cannot be obtained by blowing up
a toric log del Pezzo surface at a nonsingular $(\mathbb{C}^*)^2$-fixed point,
and there exists $i$ with $1 \leq i \leq d$ such that $\sigma_i$ and $\sigma_{i+1}$ are nonsingular.
Then $\sigma_j$ is singular for every $j \in \{1, \ldots, d\} \setminus \{i, i+1\}$.
\end{lemma}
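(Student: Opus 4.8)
The plan is, first, to translate the hypothesis on $X(\Delta)$ into combinatorics. Blowing up a toric surface at a nonsingular $(\mathbb{C}^*)^2$-fixed point is a star subdivision of a smooth two-dimensional cone $\mathrm{cone}(v_{k-1},v_{k+1})$ at the ray $v_{k-1}+v_{k+1}$, so $X(\Delta)$ is such a blow-up exactly when there is an index $k$ with $v_k=v_{k-1}+v_{k+1}$, $\det(v_{k-1},v_{k+1})=1$, and the fan $\Delta'$ obtained from $\Delta$ by deleting the ray $v_k$ is the fan of a toric log del Pezzo surface. When $v_k=v_{k-1}+v_{k+1}$ the fan $\Delta'$ is complete with $d-1$ rays and $d\geq 4$ automatically (for $d=3$ the cyclic order forces $\det(v_{k-1},v_{k+1})<0$); moreover, using $\det(v_{k-1},v_k)=\det(v_{k-1},v_{k+1})$ and $\det(v_k,v_{k-2})=\det(v_{k-1},v_{k-2})+\det(v_{k+1},v_{k-2})$, one finds that the function $f$ of $\Delta'$ satisfies $f'(k-1)=f(k-1)+\det(v_{k-2},v_{k-1})\geq 2$ and $f'(k+1)=f(k+1)+\det(v_{k+1},v_{k+2})\geq 2$ while being unchanged at all other vertices. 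So by Proposition \ref{intersection number} the hypothesis is equivalent to: no index $k$ satisfies $v_k=v_{k-1}+v_{k+1}$ and $\det(v_{k-1},v_{k+1})=1$; I call such a $k$ a blow-up ray. The consequence I will use repeatedly is that if $\sigma_{k-1}$ and $\sigma_k$ are both nonsingular then $v_k\neq v_{k-1}+v_{k+1}$, since $v_k=v_{k-1}+v_{k+1}$ would force $\det(v_{k-1},v_{k+1})=\det(v_{k-1},v_k)=1$.

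Next, if $d=3$ then $\{1,\dots,d\}\setminus\{i,i+1\}$ is a single index, whose cone is singular because $X(\Delta)$ is; so assume $d\geq 4$. After a change of coordinates put $v_i=\binom{1}{0}$, $v_{i+1}=\binom{0}{1}$. Then $\det(v_{i+1},v_{i+2})=1$ gives $v_{i+2}=\binom{-1}{a}$, $f(i+1)=2-a\geq 1$ gives $a\leq 1$, and the absence of a blow-up ray at $v_{i+1}$ gives $a\neq 1$, so $a\leq 0$; hence $\det(v_i,v_{i+2})=a\leq 0$, so $\sigma_i\cup\sigma_{i+1}$ spans angular width $\geq 180^\circ$, while the cones $\sigma_{i+2},\dots,\sigma_{i-1}$ are confined to the complementary region, of width $\leq 180^\circ$. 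The same normalization applied to any pair of consecutive nonsingular cones shows that every such pair spans width $\geq 180^\circ$.

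The core of the argument is that three consecutive nonsingular cones $\sigma_k,\sigma_{k+1},\sigma_{k+2}$ cannot occur. Normalizing $v_{k+1}=\binom{1}{0}$, $v_{k+2}=\binom{0}{1}$, the smoothness conditions together with $f(k+1),f(k+2)\geq 1$ and the absence of blow-up rays at $v_{k+1},v_{k+2}$ force $v_k=\binom{b}{-1}$ with $b\leq 0$ and $v_{k+3}=\binom{-1}{c}$ with $c\leq 0$. The complementary cone $\mathrm{cone}(v_{k+3},v_k)$ is strongly convex, so $\det(v_{k+3},v_k)=1-bc\geq 1$, which forces $bc=0$ and hence makes $\mathrm{cone}(v_{k+3},v_k)$ unimodular. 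If $d=4$ this is the one remaining cone of the fan, so $X(\Delta)$ is smooth, a contradiction. If $d\geq 5$, the remaining rays $v_{k+4},\dots,v_{k-1}$ lie strictly inside the unimodular cone; evaluating the inequalities $f(k+3)\geq 1$ and $f(k)\geq 1$ on $v_{k+4}$ and on $v_{k-1}$ forces first $b=c=0$, and then that $v_{k+4}$ has slope $\geq 1$ and $v_{k-1}$ has slope $\leq 1$ in the basis $\binom{-1}{0},\binom{0}{-1}$ of the (now third-quadrant) cone. Since the slope strictly increases along the counterclockwise ordering, the case $d=5$ makes $v_{k+4}=v_{k-1}=\binom{-1}{-1}=v_{k+3}+v_k$ a blow-up ray, and the case $d\geq 6$ makes the slopes of the distinct rays $v_{k+4},v_{k-1}$ strictly ordered, contradicting $\geq 1$ versus $\leq 1$. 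Either way this is impossible, so no nonsingular run has length $\geq 3$; in particular the maximal nonsingular run containing $\sigma_i,\sigma_{i+1}$ is exactly $\{\sigma_i,\sigma_{i+1}\}$, so $\sigma_{i-1}$ and $\sigma_{i+2}$ are singular.

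Finally, suppose some $\sigma_j$ with $j\neq i,i+1$ is nonsingular. Since $\sigma_{i-1},\sigma_{i+2}$ are singular, $j$ lies strictly between $i+2$ and $i-1$; by the previous paragraph its maximal nonsingular run $R'$ has length $\leq 2$ and is contained in $\{\sigma_{i+3},\dots,\sigma_{i-2}\}$, so $R'$ together with its two flanking singular cones is a block of cones whose indices all lie in $\{i+2,\dots,i-1\}$. By Lemma \ref{SRS}(1) when $|R'|=1$, and by the half-plane bound of the second paragraph when $|R'|=2$, this block spans angular width $>180^\circ$; but it sits inside the region spanned by $\sigma_{i+2},\dots,\sigma_{i-1}$, which has width $\leq 180^\circ$---a contradiction. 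Hence $\sigma_j$ is singular for every $j\neq i,i+1$. The step I expect to be the real obstacle is the case analysis ruling out three consecutive nonsingular cones: one must pin down the coordinates of the adjacent rays, understand how the remaining rays sit inside the unimodular complementary cone, and extract a contradiction from the inequalities $f\geq 1$ and the no-blow-up condition, with $d=4$, $d=5$, and $d\geq 6$ each handled a little differently.
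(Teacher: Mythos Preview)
Your proof is correct, but it follows a genuinely different route from the paper's.

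The paper normalizes so that $\sigma_2,\sigma_3$ are the two given nonsingular cones, writes $v_2=(-1,0),\,v_3=(0,-1),\,v_4=(1,a)$, and then assumes some other $\sigma_j$ is nonsingular. It splits into two cases. For $5\le j\le d$ it argues that $\sigma_{j-1}$ and $\sigma_{j+1}$ must both be singular (otherwise two consecutive nonsingular cones would fill the whole upper half-plane, leaving no room), and then invokes Lemma~\ref{SRS}(1) to get a half-plane contradiction. For $j\in\{1,4\}$ it does an explicit coordinate computation, writing $v_5=(b,ab+1)$ and $v_1=(x_1,y_1)$, and in three subcases (depending on the sign of $x_1$ and whether $a=0$) either derives a determinant contradiction directly or exhibits $v_5$ as a nontrivial convex combination lying on a shorter ray than a vertex of $Q$, contradicting convexity.

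Your argument replaces this case split by a structural lemma: under the hypotheses there can be no run of three consecutive nonsingular cones. You prove this by normalizing the middle pair, pinning down the two outer rays $v_k=(b,-1)$ and $v_{k+3}=(-1,c)$ with $b,c\le 0$, forcing $bc=0$ via positivity of the complementary determinant, then using $f(k),f(k+3)\ge 1$ to force $b=c=0$, and finally deriving a contradiction from slope constraints on the rays inside the third-quadrant complementary cone. Once runs of length $\ge 3$ are excluded, a separate nonsingular $\sigma_j$ would sit in a run $R'$ of length $1$ or $2$ in the complementary region of width $\le 180^\circ$; Lemma~\ref{SRS}(1) (for $|R'|=1$) or your half-plane bound on nonsingular pairs (for $|R'|=2$) then makes the block through $R'$ exceed $180^\circ$, a contradiction.

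What each approach buys: the paper's proof is shorter and more direct, leaning on concrete coordinate identities and the LDP-polygon convexity; your proof isolates a clean auxiliary statement (``no three consecutive nonsingular cones'') that is conceptually appealing and could be reused, and the final step is a uniform angular-width argument rather than a case-by-case coordinate check.
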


\begin{proof}
It is obvious for $d=3$.
We may assume that $d \geq 4, i=2$ and
\begin{equation*}
v_2=\left(\begin{array}{c} -1 \\ 0 \end{array}\right),
v_3=\left(\begin{array}{c} 0 \\ -1 \end{array}\right),
v_4=\left(\begin{array}{c} 1 \\ a \end{array}\right)
\end{equation*}
for $a \in \mathbb{Z}$.
Assume for contradiction that
there exists $j \in \{1, 4, 5, \ldots, d\}$ such that $\sigma_j$ is nonsingular.
Proposition \ref{intersection number} and $f(3)=a+2$ imply $a \geq -1$.
Since $X(\Delta)$ cannot be obtained by blowing up a toric log del Pezzo surface
at a nonsingular $(\mathbb{C}^*)^2$-fixed point,
we have $v_2+v_4 \neq v_3$ (see, for example \cite[Proposition 3.3.15]{CLS}).
Thus $a \geq 0$ and $y_5, \ldots, y_d, y_1 \geq 1$.
In particular,
$\sigma_2 \cup \sigma_3
\supset \left\{\left(\begin{array}{c} x \\ y \end{array}\right) \in \mathbb{R}^2; y \leq 0\right\}$.

{\it Case 1}. Suppose that $5 \leq j \leq d$.
If $\sigma_{j-1}$ (resp.\ $\sigma_{j+1}$) is nonsingular,
then $a=0$ and
$\sigma_{j-1} \cup \sigma_j$ (resp.\ $\sigma_j \cup \sigma_{j+1}$)
coincides with $\left\{\left(\begin{array}{c} x \\ y \end{array}\right) \in \mathbb{R}^2; y \geq 0\right\}$,
a contradiction.
Hence both $\sigma_{j-1}$ and $\sigma_{j+1}$ are singular.
However, by Lemma \ref{SRS} (1),
the cones $\sigma_{j-1}, \sigma_j, \sigma_{j+1}$ cover more than a half-plane.
This is a contradiction.

{\it Case 2}. Suppose that $j=1$ or $j=4$.
We may assume that $j=4$.
Since $\Delta$ is singular,
we may further assume that $\sigma_1$ is singular.
Then $y_1 \geq 2$ and $v_5=\left(\begin{array}{c} b \\ ab+1 \end{array}\right)$
for some $b \in \mathbb{Z}$.
Note that $ab=y_5-1 \geq 0$.
Proposition \ref{intersection number} and $f(2)=x_1+y_1+1$ imply $x_1 \geq -y_1$.
Since $y_1 \geq 2$ and $v_1$ is primitive, we have $x_1 \geq 1-y_1$ and $x_1 \ne 0$.
Furthermore, $1 \leq f(4)=2-b$ and $v_3+v_5 \neq v_4$ imply $b \leq 0$.
There are three subcases to consider.

{\it Subcase 2.1}. Suppose that $x_1 \geq 1$.
Then $d \geq 5$ since $x_5=b \leq 0$.
However, $1 \leq \det(v_5, v_1)=by_1-(ab+1)x_1<0$, which is a contradiction.

{\it Subcase 2.2}. Suppose that $x_1 \leq -1$ and $a=0$. Then
$v_4=\left(\begin{array}{c} 1 \\ 0 \end{array}\right),
v_5=\left(\begin{array}{c} b \\ 1 \end{array}\right)$ and $d \geq 5$.
Let $Q=\mathrm{conv}\{v_1, \ldots, v_d\}$.
We see that $(\det(v_5, v_1)+1)/y_1>0$ and
\begin{equation*}
\frac{\det(v_5, v_1)+1}{y_1}\left(\left(1-\frac{1}{\det(v_5, v_1)+1}\right)v_4
+\frac{1}{\det(v_5, v_1)+1}v_1\right)=v_5.
\end{equation*}
However, $\det(v_5, v_1)+1=by_1-x_1+1 \leq by_1+y_1 \leq y_1$
and thus $0<(\det(v_5, v_1)+1)/y_1 \leq 1$,
which contradicts that $v_5$ is a vertex of $Q$.

{\it Subcase 2.3}. Suppose that $x_1 \leq -1$ and $a \geq 1$.
Since $ab \geq 0$ and $b \leq 0$, we must have $b=0$.
Hence
$v_4=\left(\begin{array}{c} 1 \\ a \end{array}\right),
v_5=\left(\begin{array}{c} 0 \\ 1 \end{array}\right)$ and $d \geq 5$.
Let $Q=\mathrm{conv}\{v_1, \ldots, v_d\}$.
We see that $(1-x_1)/\det(v_4, v_1)>0$ and
\begin{equation*}
\frac{1-x_1}{\det(v_4, v_1)}\left(\left(1-\frac{1}{1-x_1}\right)v_4
+\frac{1}{1-x_1}v_1\right)=v_5.
\end{equation*}
However, $\det(v_4, v_1)=y_1-ax_1 \geq (1-x_1)-ax_1 \geq 1-x_1+a>1-x_1$
and thus $0<(1-x_1)/\det(v_4, v_1)<1$,
which contradicts that $v_5$ is a vertex of $Q$.

Thus we have reached a contradiction in every case.
Hence $\sigma_1, \sigma_4, \sigma_5, \ldots, \sigma_d$ are singular.
\end{proof}

We are now ready to prove Theorem \ref{2singularities}.

\begin{proof}[Proof of Theorem \ref{2singularities}]
Let $X(\Delta)$ be a complete toric surface with two singular points.
Then there exists at least one $i$ with $1 \leq i \leq d$ such that $\sigma_i$ is nonsingular.
Hence we may assume that
$i=1, v_1=\left(\begin{array}{c} 1 \\ 0 \end{array}\right),
v_2=\left(\begin{array}{c} 0 \\ 1 \end{array}\right)$.

{\it The case where $d=3$}.
Every complete toric surface of Picard number one is log del Pezzo (see \cite[Lemma 6.4]{D09}).
Since $|\mathrm{Sing}(X(\Delta))|=2$, the cones $\sigma_2$ and $\sigma_3$ are singular.
Hence $v_3=\left(\begin{array}{c} -p \\ -q \end{array}\right)$
for some $p, q \in \mathbb{Z}$ with $p, q \geq 2$ and $\gcd(p, q)=1$.
Conversely,
for any $p, q \in \mathbb{Z}$ with $p, q \geq 2$ and $\gcd(p, q)=1$,
the convex hull
$\mathrm{conv}\left\{\left(\begin{array}{c} 1 \\ 0 \end{array}\right),
\left(\begin{array}{c} 0 \\ 1 \end{array}\right),
\left(\begin{array}{c} -p \\ -q \end{array}\right)\right\}$
is an LDP-polygon and the associated toric log del Pezzo surface
has exactly two singular points.

{\it The case where $d=4$}.
Suppose that $X(\Delta)$ is log del Pezzo.
By Lemma \ref{SRS} (2), either $\sigma_2$ or $\sigma_4$ is nonsingular.
We may assume that $\sigma_2$ is nonsingular.
Then $v_3=\left(\begin{array}{c} -1 \\ p \end{array}\right),
v_4=\left(\begin{array}{c} q \\ r \end{array}\right)$
for some $p, q, r \in \mathbb{Z}$ with $\gcd(q, r)=1$.
Hence it suffices to show that for $p, q, r \in \mathbb{Z}$ with $\gcd(q, r)=1$,
the sequence
\begin{equation*}
v_1=\left(\begin{array}{c} 1 \\ 0 \end{array}\right),
v_2=\left(\begin{array}{c} 0 \\ 1 \end{array}\right),
v_3=\left(\begin{array}{c} -1 \\ p \end{array}\right),
v_4=\left(\begin{array}{c} q \\ r \end{array}\right)
\end{equation*}
go around the origin exactly once in this order
and the associated toric surface $X(\Delta)$
is a toric log del Pezzo surface with $|\mathrm{Sing}(X(\Delta))|=2$ if and only if
$p \leq 1$ and $r \leq \min\{-pq-2, -2, -q-1, q-pq-1\}$.

If $v_1, \ldots, v_4$ determine a complete fan $\Delta$, then we have
\begin{itemize}
\item $\sigma_1$ and $\sigma_2$ are nonsingular;
\item $f(2) \geq 1 \Leftrightarrow p \leq 1$;
\item $\sigma_3$ is singular if and only if $r \leq -pq-2$;
\item $\sigma_4$ is singular if and only if $r \leq -2$;
\item $f(1) \geq 1 \Leftrightarrow r \leq -q$;
\item $f(3) \geq 1 \Leftrightarrow r \leq q-pq$.
\end{itemize}
Suppose that $X(\Delta)$
is a toric log del Pezzo surface with $|\mathrm{Sing}(X(\Delta))|=2$.
Since $r \leq -2$ and $v_4$ is primitive, we have $r \leq -q-1$.
If $r=q-pq$, then $q=\pm1$ since $v_4$ is primitive, which contradicts that $r \leq -pq-2$.
Hence $r \leq q-pq-1$.
Therefore $p \leq 1$ and $r \leq \min\{-pq-2, -2, -q-1, q-pq-1\}$.
Conversely, suppose that $p \leq 1$ and $r \leq \min\{-pq-2, -2, -q-1, q-pq-1\}$.
It suffices to show that $f(4) \geq 1$, that is, $p(1-q)-2r \geq 1$.
Since $r \leq -2$ and $v_4$ is primitive, we have $q \ne 0$.

{\it Case 1}. Suppose $q \geq 1$. Then $p \leq 1$ implies $p(1-q) \geq 1-q$.
Since $r \leq -q-1$, we have $-2r \geq 2q+2$.
Thus $f(4)=p(1-q)-2r \geq (1-q)+(2q+2)=q+3 \geq 4$.

{\it Case 2}. Suppose $q \leq -1$. It suffices to show $pq(1-q)-2qr \leq q$.
The assumption $pq \leq -r-2$ implies $pq(1-q) \leq (q-1)(r+2)$.
Since $r+2 \leq 0$ and $q-1 \geq 2q$, we have $(q-1)(r+2) \leq 2q(r+2)$.
Thus $pq(1-q)-2qr \leq 2q(r+2)-2qr=4q<q$.

In every case, we obtain $f(4) \geq 1$.
Therefore $X(\Delta)$ is a toric log del Pezzo surface with $|\mathrm{Sing}(X(\Delta))|=2$.

{\it The case where $d \geq 5$}.
First we show the following claims:

\begin{claim}\label{claim1}
If $X(\Delta)$ is a toric log del Pezzo surface
with $d \geq 4$ and $|\mathrm{Sing}(X(\Delta))|=2$,
then there is a sequence of toric log del Pezzo surfaces
\begin{equation*}
X(\Delta)=X_d\stackrel{\pi_d}{\longrightarrow}
X_{d-1}\stackrel{\pi_{d-1}}{\longrightarrow}\cdots\stackrel{\pi_6}{\longrightarrow}
X_5\stackrel{\pi_5}{\longrightarrow}X_4,
\end{equation*}
where $\pi_i$ is the blow-up of $X_{i-1}$ at a nonsingular $(\mathbb{C}^*)^2$-fixed point
for $5 \leq i \leq d$.
\end{claim}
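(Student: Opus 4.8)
The plan is to induct on the number $d$ of rays, equivalently on the Picard number $\rho(X(\Delta)) = d-2$. The case $d = 4$ is the base of the induction and is vacuous: one simply sets $X_4 = X(\Delta)$. So I assume $d \geq 5$ and that the claim has already been established for every toric log del Pezzo surface with $d-1$ rays and exactly two singular points. The goal is then to exhibit a single blow-down $\pi_d \colon X(\Delta) = X_d \to X_{d-1}$ at a nonsingular $(\mathbb{C}^*)^2$-fixed point, with $X_{d-1}$ again a toric log del Pezzo surface having exactly two singular points, and then to apply the inductive hypothesis to $X_{d-1}$.

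First I would produce two consecutive nonsingular cones. Since $|\mathrm{Sing}(X(\Delta))| = 2$, exactly two of $\sigma_1, \ldots, \sigma_d$ are singular, so these two singular cones cut the cyclically ordered list $\sigma_1, \ldots, \sigma_d$ into at most two arcs of nonsingular cones; as there are $d - 2 \geq 3$ nonsingular cones in all, one of these arcs contains at least two of them. Hence there is an index $i$ with $\sigma_i$ and $\sigma_{i+1}$ both nonsingular.

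Next I would feed this into Lemma \ref{RR}. The surface $X(\Delta)$ is singular, so if it could \emph{not} be obtained by blowing up a toric log del Pezzo surface at a nonsingular $(\mathbb{C}^*)^2$-fixed point, then Lemma \ref{RR}, applied at the index $i$ just found, would force $\sigma_j$ to be singular for every $j \in \{1, \ldots, d\} \setminus \{i, i+1\}$, giving $|\mathrm{Sing}(X(\Delta))| = d - 2 \geq 3$, a contradiction. Therefore $X(\Delta)$ is the blow-up $\pi_d \colon X(\Delta) = X_d \to X_{d-1}$ of a toric log del Pezzo surface $X_{d-1}$ at a nonsingular $(\mathbb{C}^*)^2$-fixed point. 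The point I expect to be most delicate is precisely this interface with Lemma \ref{RR}: one must check that the negation of its hypothesis is exactly the statement one wants, and note that Lemma \ref{RR} already delivers the target $X_{d-1}$ known to be log del Pezzo, so no separate verification of the log del Pezzo property of $X_{d-1}$ is needed.

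Finally I would check that $X_{d-1}$ has exactly two singular points, after which the inductive hypothesis completes the argument. On the level of fans, passing from $X_{d-1}$ to $X(\Delta)$ is the star subdivision of a smooth two-dimensional cone $\sigma = \mathbb{R}_{\geq 0} v_a + \mathbb{R}_{\geq 0} v_b$ with $\det(v_a, v_b) = 1$: the cone $\sigma$ is replaced by $\mathbb{R}_{\geq 0} v_a + \mathbb{R}_{\geq 0}(v_a + v_b)$ and $\mathbb{R}_{\geq 0}(v_a + v_b) + \mathbb{R}_{\geq 0} v_b$, both of determinant $1$, while every other two-dimensional cone is left unchanged. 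Thus the singular cones of $X(\Delta)$ and of $X_{d-1}$ coincide, so $|\mathrm{Sing}(X_{d-1})| = 2$, and $X_{d-1}$ has $d - 1 \geq 4$ rays. Applying the inductive hypothesis to $X_{d-1}$ yields a chain $X_{d-1} \to \cdots \to X_4$ of blow-ups at nonsingular $(\mathbb{C}^*)^2$-fixed points, and prepending $\pi_d$ gives the required sequence. Apart from the use of Lemma \ref{RR}, everything here is the elementary combinatorics of the cyclic arrangement of the cones together with the determinant computation for a single star subdivision.
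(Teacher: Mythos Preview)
Your argument is correct and follows essentially the same route as the paper: induction on $d$, using Lemma~\ref{RR} to force the existence of a blow-down to a toric log del Pezzo surface, then applying the inductive hypothesis. You are slightly more explicit than the paper in two places---the pigeonhole argument producing two consecutive nonsingular cones, and the verification that $|\mathrm{Sing}(X_{d-1})|=2$ after the star subdivision is undone---but these are the same steps the paper leaves implicit.
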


\begin{proof}[Proof of Claim \ref{claim1}]
We use induction on $d$.
It is obvious for $d=4$. Assume $d \geq 5$.
We may assume that $\sigma_1$ and $\sigma_2$ are nonsingular.
If $X(\Delta)=X_d$ cannot be obtained by blowing-up a toric log del Pezzo surface
at a nonsingular $(\mathbb{C}^*)^2$-fixed point,
then $\sigma_3, \ldots, \sigma_d$ are all singular by Lemma \ref{RR},
which contradicts that $|\mathrm{Sing}(X(\Delta))|=2$.
Hence $v_2=v_1+v_3$.
Let $X_{d-1}$ be the toric log del Pezzo surface
associated to the fan formed from the sequence $v_1, v_3, v_4, \ldots, v_d$.
Then we have the blow-up $\pi_d:X_d \rightarrow X_{d-1}$.
By the hypothesis of induction,
there is a sequence of toric log del Pezzo surfaces
\begin{equation*}
X(\Delta)=X_d\stackrel{\pi_d}{\longrightarrow}
X_{d-1}\stackrel{\pi_{d-1}}{\longrightarrow}
X_{d-2}\stackrel{\pi_{d-2}}{\longrightarrow}\cdots\stackrel{\pi_6}{\longrightarrow}
X_5\stackrel{\pi_5}{\longrightarrow}X_4,
\end{equation*}
where $\pi_i$ is the blow-up of $X_{i-1}$ at a nonsingular $(\mathbb{C}^*)^2$-fixed point
for $5 \leq i \leq d$.
\end{proof}

\begin{claim}\label{claim2}
If $X$ is a toric log del Pezzo surface with $|\mathrm{Sing}(X)|=2$, then $\rho(X) \leq 3$.
\end{claim}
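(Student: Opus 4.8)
The plan is to deduce this from Claim \ref{claim1}. Since $\rho(X)=d-2$, the assertion is equivalent to $d\leq 5$, so suppose for contradiction that $d\geq 6$. Blowing up a toric log del Pezzo surface at a nonsingular $(\mathbb{C}^*)^2$-fixed point $\mathrm{orb}(\sigma_i)$ amounts to inserting the ray $v_i+v_{i+1}$ into $\sigma_i$, and the two resulting cones $\mathbb{R}_{\geq 0}v_i+\mathbb{R}_{\geq 0}(v_i+v_{i+1})$ and $\mathbb{R}_{\geq 0}(v_i+v_{i+1})+\mathbb{R}_{\geq 0}v_{i+1}$ are again nonsingular; hence such a blow-up leaves the number of singular points unchanged. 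Therefore every surface $X_i$ occurring in the chain of Claim \ref{claim1} is a toric log del Pezzo surface with exactly two singular points, and in particular there is a toric log del Pezzo surface $X_6$ with six rays and two singular points fitting into blow-ups $X_6\stackrel{\pi_6}{\longrightarrow}X_5\stackrel{\pi_5}{\longrightarrow}X_4$ at nonsingular $(\mathbb{C}^*)^2$-fixed points.

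By the case $d=4$ treated above, $X_4$ is equivalent to the polygon in part (2) of Theorem \ref{2singularities}, so we may take $v_1=(1,0),\ v_2=(0,1),\ v_3=(-1,p),\ v_4=(q,r)$, with $\sigma_3$ and $\sigma_4$ its two singular cones. Thus $\pi_5$ inserts either the ray $v_1+v_2$ into $\sigma_1$ or the ray $v_2+v_3$ into $\sigma_2$; in either case $X_5$ has exactly three nonsingular cones, and they are consecutive, with primitive ray generators $w_1,w_2,w_3,w_4$ listed counterclockwise, where $w_2=w_1+w_3$ in the first case and $w_3=w_2+w_4$ in the second. Finally $\pi_6$ inserts the ray $w_j+w_{j+1}$ for some $j\in\{1,2,3\}$.

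For each of these six configurations I would evaluate, using $f(i)=\det(v_{i-1},v_i)+\det(v_i,v_{i+1})+\det(v_{i+1},v_{i-1})$ and Proposition \ref{intersection number}, the function $f$ of the fan of $X_6$ at the two endpoints of the newly inserted cone. In four of the six cases one of these two values is forced to equal $0$, so $X_6$ is not log del Pezzo by Proposition \ref{intersection number}. In the remaining two cases --- inserting $w_3+w_4$ when $w_2=w_1+w_3$, and inserting $w_1+w_2$ when $w_3=w_2+w_4$ --- the two inequalities $f\geq 1$ for $X_6$ force $p\leq -1$ together with $q\geq 0$ in the first case and $q\leq 0$ in the second; but one of the inequalities $f\geq 1$ for the fan of $X_5$, which holds because $X_5$ is log del Pezzo, reads $q\leq 0$ in the first case and $q\geq 0$ in the second, so $q=0$, contradicting that $v_4=(q,r)$ is primitive with $r\leq -2$. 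In every case $X_6$ cannot exist, so $d\leq 5$ and $\rho(X)\leq 3$.

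Each value of $f$ above comes from a one-line determinant computation, so the only real work is organizing the at most six cases; the step I would flag is the last one, where the contradiction is not internal to $X_6$ but is obtained by combining the log del Pezzo condition on $X_6$ with that on $X_5$.
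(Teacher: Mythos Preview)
Your argument is correct and follows the same outline as the paper's: use Claim~\ref{claim1} to reduce to showing that no blow-up of a five-ray log del Pezzo surface $X_5$ at a nonsingular fixed point can again be log del Pezzo, and then eliminate each candidate $X_6$ by an $f$-value computation via Proposition~\ref{intersection number}.

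The execution differs in one respect worth noting. You keep track of both possible forms of $X_5$ (blow-up of $X_4$ at $\sigma_1$ versus at $\sigma_2$), leading to six cases, two of which you close by combining an inequality from $X_6$ with one from $X_5$. The paper instead observes, implicitly, that these two forms of $X_5$ are unimodularly equivalent --- the reflection $\left(\begin{smallmatrix}-1&0\\p&1\end{smallmatrix}\right)$ carries one to the other --- so it works only with the normal form
\[
v_1=\begin{pmatrix}1\\0\end{pmatrix},\ v_2=\begin{pmatrix}0\\1\end{pmatrix},\ v_3=\begin{pmatrix}-1\\p+1\end{pmatrix},\ v_4=\begin{pmatrix}-1\\p\end{pmatrix},\ v_5=\begin{pmatrix}q\\r\end{pmatrix}.
\]
Then $f(4)=q+1\geq 1$ and primitivity force $q\geq 1$, after which the three blow-up obstructions are the one-line checks $1-q\leq 0$, $0$, $0$. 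Your two ``remaining'' cases are in fact the \emph{same} $X_6$ reached from the two different $X_5$'s, and the $X_5$-inequality you invoke (namely $q\leq 0$ at $(1,0)$, resp.\ $q\geq 0$ at $(-1,p)$) is already an $f$-inequality of $X_6$ itself, since the relevant ray has the same two neighbours in $X_5$ and in $X_6$; so the contradiction is internal to $X_6$ after all. This does not affect correctness, but it explains why the paper's version avoids the apparent asymmetry you flagged.
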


\begin{proof}[Proof of Claim \ref{claim2}]
Let $X'$ be a toric log del Pezzo surface of Picard number two with $|\mathrm{Sing}(X')|=2$,
and let $X$ be the blow-up of $X'$ at one nonsingular $(\mathbb{C}^*)^2$-fixed point.
Then $X$ is isomorphic to $X(\Delta)$,
where $\Delta$ is the complete fan in $\mathbb{R}^2$ formed from the sequence
\begin{equation*}
v_1=\left(\begin{array}{c} 1 \\ 0 \end{array}\right),
v_2=\left(\begin{array}{c} 0 \\ 1 \end{array}\right),
v_3=\left(\begin{array}{c} -1 \\ p+1 \end{array}\right),
v_4=\left(\begin{array}{c} -1 \\ p \end{array}\right),
v_5=\left(\begin{array}{c} q \\ r \end{array}\right)
\end{equation*}
for $p, q, r \in \mathbb{Z}$ with
$p \leq 1, r \leq \min\{-pq-2, -2, -q-1, q-pq-1\}, \gcd(q, r)=1$.
By Claim \ref{claim1}, it suffices to show that
if $X(\Delta)$ is log del Pezzo,
then the blow-up of $X(\Delta)$ at any nonsingular $(\mathbb{C}^*)^2$-fixed point is not log del Pezzo.
Suppose that $X(\Delta)$ is log del Pezzo.
Proposition \ref{intersection number} and $f(4)=q+1$ imply $q \geq 0$.
Since $r \leq -2$ and $v_5$ is primitive, we have $q \geq 1$.
The nonsingular $(\mathbb{C}^*)^2$-fixed points of $X(\Delta)$
are $\mathrm{orb}(\sigma_1), \mathrm{orb}(\sigma_2), \mathrm{orb}(\sigma_3)$.
However, we see that
\begin{align*}
\det(v_5, v_1)+\det(v_1, v_1+v_2)+\det(v_1+v_2, v_5)
&=1-q\leq0,\\
\det(v_2+v_3, v_3)+\det(v_3, v_4)+\det(v_4, v_2+v_3)
&=0,\\
\det(v_2, v_3)+\det(v_3, v_3+v_4)+\det(v_3+v_4, v_2)
&=0.
\end{align*}
Hence the blow-up of $X(\Delta)$ at any nonsingular $(\mathbb{C}^*)^2$-fixed point is not log del Pezzo.
This completes the proof of Claim \ref{claim2}.
\end{proof}

By Claims \ref{claim1} and \ref{claim2},
it suffices to show that for $p, q, r \in \mathbb{Z}$ with $\gcd(q, r)=1$,
the sequence
\begin{equation*}
v_1=\left(\begin{array}{c} 1 \\ 0 \end{array}\right),
v_2=\left(\begin{array}{c} 0 \\ 1 \end{array}\right),
v_3=\left(\begin{array}{c} -1 \\ p+1 \end{array}\right),
v_4=\left(\begin{array}{c} -1 \\ p \end{array}\right),
v_5=\left(\begin{array}{c} q \\ r \end{array}\right)
\end{equation*}
go around the origin exactly once in this order
and the associated toric surface $X(\Delta)$
is a toric log del Pezzo surface with $|\mathrm{Sing}(X(\Delta))|=2$
if and only if $p \leq 0$ and $1 \leq q \leq -r-1$.

If $v_1, \ldots, v_5$ determine a complete fan $\Delta$, then we have
\begin{itemize}
\item $\sigma_1, \sigma_2, \sigma_3$ are nonsingular;
\item $\sigma_5$ is singular if and only if $r \leq -2$;
\item $f(1) \geq 1$ if and only if $q \leq -r$;
\item $f(2) \geq 1$ if and only if $p \leq 0$;
\item $f(3)=1$;
\item $f(4) \geq 1$ if and only if $q \geq 0$.
\end{itemize}
If $X(\Delta)$ is a toric log del Pezzo surface with $|\mathrm{Sing}(X(\Delta))|=2$,
then $p \leq 0$ and $1 \leq q \leq -r-1$, since $r \leq -2$ and $v_5$ is primitive.
Conversely, suppose that $p \leq 0$ and $1 \leq q \leq -r-1$.
We need to show that $f(5) \geq 1$ and $\det(v_4, v_5) \geq 2$.
Since $p(1-q) \geq 0$ and $r \leq -2$,
we have $f(5)=p(1-q)-2r \geq 4$ and $\det(v_4, v_5)=-r-pq \geq 2$.
Thus $X(\Delta)$ is a toric log del Pezzo surface with $|\mathrm{Sing}(X(\Delta))|=2$.
This completes the proof of Theorem \ref{2singularities}.
\end{proof}

\section{Proof of Theorem \ref{3singularities}}

In this section, we prove Theorem \ref{3singularities}.
First we show the following lemma:

\begin{lemma}\label{SRSRS}
There are no toric log del Pezzo surfaces $X(\Delta)$ with $d=5$ and
$\mathrm{Sing}(X(\Delta))=\{\mathrm{orb}(\sigma_1), \mathrm{orb}(\sigma_3), \mathrm{orb}(\sigma_5)\}$.
\end{lemma}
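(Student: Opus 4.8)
The plan is to argue by contradiction. Suppose such an $X(\Delta)$ exists; then $d=5$, the cones $\sigma_1,\sigma_3,\sigma_5$ are singular and $\sigma_2,\sigma_4$ are nonsingular. I would normalize, pin down the five primitive generators as tightly as possible, and then play the five inequalities $f(i)\ge 1$ of Proposition \ref{intersection number} against the unimodularity relation $\det(v_4,v_5)=1$ until the numbers can no longer be reconciled.

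Since $\sigma_2$ is nonsingular, $\det(v_2,v_3)=1$, so after a unimodular change of coordinates I may take $v_2=(-1,0)$ and $v_3=(0,-1)$, as in the proof of Lemma \ref{SRS}. Write $v_1=(x_1,y_1)$, $v_4=(x_4,y_4)$, $v_5=(x_5,y_5)$. Singularity of $\sigma_1$ and $\sigma_3$ gives $y_1=\det(v_1,v_2)\ge 2$ and $x_4=\det(v_3,v_4)\ge 2$, while Lemma \ref{SRS}(1) applied at $i=4$ — legitimate because $d=5\ge 4$, $\sigma_3,\sigma_5$ are singular, and $\sigma_4$ is not — gives $\det(v_1,v_3)=-x_1\ge 2$, hence $x_1\le -2$. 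From Proposition \ref{intersection number} I get $f(2)=x_1+y_1+1\ge 1$ and $f(3)=x_4+y_4+1\ge 1$; since equality in $x_1+y_1\ge 0$ (resp.\ $x_4+y_4\ge 0$) would make $v_1$ (resp.\ $v_4$) non-primitive, this upgrades to $x_1+y_1\ge 1$ and $x_4+y_4\ge 1$.

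The heart of the proof is to trap $v_5$. Nonsingularity of $\sigma_4$ gives $\det(v_4,v_5)=x_4y_5-x_5y_4=1$; $f(4)=x_4+1-x_5\ge 1$ forces $x_5\le x_4$, and $x_5=x_4$ is impossible because then $\det(v_4,v_5)=x_4(y_5-y_4)$ would be divisible by $x_4\ge 2$, so $p:=x_4-x_5\ge 1$. Rewriting $f(5)\ge 1$ as $\det(v_5,v_1)\ge\det(v_4,v_1)$, i.e.\ $-py_1\ge x_1(y_5-y_4)$, the signs ($p\ge 1$, $y_1\ge 2$, $x_1<0$) force $q:=y_5-y_4\ge 1$, and substituting $y_1\ge 1-x_1$ then gives $q\ge p+1$. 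Feeding $x_5=x_4-p$, $y_5=y_4+q$ into $\det(v_4,v_5)=1$ turns it into $x_4q+py_4=1$, and since $x_4q\ge 2$ this forces $y_4\le -1$, whence $x_4\ge 1-y_4$ by $x_4+y_4\ge 1$. Then $1-py_4=x_4q\ge(1-y_4)(p+1)$, which rearranges to $0\ge p+(-y_4)$ — absurd, since $p\ge 1$ and $-y_4\ge 1$.

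I expect the only genuinely load-bearing ingredient to be the bound $x_1\le -2$ from Lemma \ref{SRS}(1): without controlling the sign of $x_1$, the step extracting $q\ge 1$ (and then $q\ge p+1$) from $f(5)\ge 1$ collapses, and there is no a priori reason $v_1$ should lie in the second quadrant. The remaining work is routine manipulation of the six determinants $\det(v_i,v_{i+1})$ and the inequalities $f(i)\ge 1$, together with the two primitivity checks discarding $x_1+y_1=0$ and $x_4+y_4=0$, so I do not foresee any further obstacle.
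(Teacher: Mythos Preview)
Your argument is correct. The normalization and the extraction of $x_1\le -2$ from Lemma~\ref{SRS}(1) at $i=4$ are legitimate, and the chain
\[
f(4)\ge 1 \Rightarrow p:=x_4-x_5\ge 1,\qquad
f(5)\ge 1 \Rightarrow q:=y_5-y_4\ge p+1,\qquad
x_4q+py_4=1 \Rightarrow y_4\le -1
\]
followed by $1-py_4=x_4q\ge(1-y_4)(p+1)$, hence $0\ge p-y_4$, is a clean contradiction.

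The paper proceeds quite differently. It never invokes Lemma~\ref{SRS} here; instead it splits on the sign of $y_5$ and in each case manufactures an auxiliary LDP-polygon with four vertices (either $\mathrm{conv}\{(-1,1),v_3,v_4,v_5\}$ or $\mathrm{conv}\{v_2,v_3,v_4,v_5\}$), checks it is log del Pezzo, and then appeals to the already-proved classifications in Theorem~\ref{Dais} and Theorem~\ref{2singularities} to see that the pattern of singular cones is impossible. Your route is more self-contained and purely arithmetic: it needs only Lemma~\ref{SRS}(1) and Proposition~\ref{intersection number}, avoiding any dependence on the earlier classification theorems. The paper's route, by contrast, illustrates the ``reduce to a smaller polygon and quote the previous theorem'' mechanism that it reuses later (e.g.\ in Claim~\ref{claim3} and the final Proposition), so it fits the global architecture of the paper even if it is less direct for this single lemma.
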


\begin{proof}
Assume for contradiction that there exists a toric log del Pezzo surface $X(\Delta)$ with $d=5$ and
$\mathrm{Sing}(X(\Delta))=\{\mathrm{orb}(\sigma_1), \mathrm{orb}(\sigma_3), \mathrm{orb}(\sigma_5)\}$.
We may assume that $v_2=\left(\begin{array}{c} -1 \\ 0 \end{array}\right),
v_3=\left(\begin{array}{c}0 \\ -1 \end{array}\right)$.
Then $y_1 \geq 2, x_4 \geq 2, x_1+y_1 \geq 1, x_4+y_4 \geq 1$.

{\it Case 1}. Suppose $y_5 \leq 0$. Then $x_5 \geq 1$.
We show $x_5+y_5-x_4-y_4 \geq 1$.
Since $f(5)=(x_5-x_4)y_1-x_1(y_5-y_4)+1$ and $f(4)=x_4-x_5+1$,
Proposition \ref{intersection number}
gives $(x_5-x_4)y_1-x_1(y_5-y_4) \geq 0$ and $x_4 \geq x_5$.
It follows that $1=\det(v_4, v_5)=x_4y_5-x_5y_4 \leq x_5y_5-x_5y_4=x_5(y_5-y_4)$
and thus $y_5-y_4 \geq 1$.
Since $x_1 \geq 1-y_1$, we have $x_1(y_5-y_4) \geq (1-y_1)(y_5-y_4)$.
Hence
\begin{align*}
0 &\leq (x_5-x_4)y_1-x_1(y_5-y_4) \\
&\leq (x_5-x_4)y_1-(1-y_1)(y_5-y_4) \\
&=(x_5+y_5-x_4-y_4)y_1-(y_5-y_4) \\
&\leq (x_5+y_5-x_4-y_4)y_1-1.
\end{align*}
Therefore, $x_5+y_5-x_4-y_4 \geq 1$.

Let $v'=\left(\begin{array}{c} -1 \\ 1 \end{array}\right)$.
We consider the fan $\Delta'$ formed from the sequence $v', v_3, v_4, v_5$.
We calculate
\begin{align*}
\det(v_5, v')+\det(v', v_3)+\det(v_3, v_5)&=(x_5+y_5)+1+x_5 \\
&\geq (x_4+y_4+1)+1+x_5 \geq 3+x_5 \geq 4, \\
\det(v', v_3)+\det(v_3, v_4)+\det(v_4, v')&=1+x_4+(x_4+y_4) \geq 1+x_4+1 \geq 4, \\
\det(v_3, v_4)+\det(v_4, v_5)+\det(v_5, v_3)&=f(4) \geq 1, \\
\det(v_4, v_5)+\det(v_5, v')+\det(v', v_4)&=1+x_5+y_5-x_4-y_4 \geq 2.
\end{align*}
Hence $X(\Delta')$ is a toric log del Pezzo surface.
However, $\det(v_3, v_4) \ne 1$ and $\det(v_5, v')=x_5+y_5 \geq x_4+y_4+1 \geq 2$
while $\det(v', v_3)=\det(v_4, v_5)=1$,
which contradicts Theorem \ref{2singularities}.

{\it Case 2}. Suppose $y_5=1$.
Then $Q=\mathrm{conv}\{v_2, v_3, v_4, v_5\}$ is an LDP-polygon with four vertices
and the associated toric log del Pezzo surface $X_Q$ has exactly one singular point.
By Theorem \ref{Dais}, we have either $v_2=v_5+v_3$ or $v_5=v_4+v_2$.
If $v_2=v_5+v_3$, then
$v_5=v_2-v_3=\left(\begin{array}{c} -1 \\ 1 \end{array}\right)$
and thus $2 \leq \det(v_5, v_1)=-x_1-y_1 \leq -1$, a contradiction.
If $v_5=v_4+v_2$, then
$v_4=v_5-v_2=\left(\begin{array}{c} x_5+1 \\ 1 \end{array}\right)$
and thus $1 \leq \det(v_4, v_5)+\det(v_5, v_1)+\det(v_1, v_4)=1-y_1 \leq 1-2=-1$, a contradiction.

{\it Case 3}. Suppose $y_5 \geq 2$.
Then $Q=\mathrm{conv}\{v_2, v_3, v_4, v_5\}$ is an LDP-polygon with four vertices
and the associated toric log del Pezzo surface $X_Q$ has exactly two singular points.
However, $\det(v_3, v_4) \ne 1$ and $\det(v_5, v_2)=y_5 \geq 2$
while $\det(v_2, v_3)=\det(v_4, v_5)=1$,
which contradicts Theorem \ref{2singularities}.

Thus we have reached a contradiction in every case.
\end{proof}

\begin{proof}[Proof of Theorem \ref{3singularities}]
Let $X(\Delta)$ be a toric log del Pezzo surface with three singular points.
If $d \leq 4$, then there is nothing to prove.
Assume $d=5$.
By Lemma \ref{SRSRS},
there exists $i$ such that $\sigma_i$ and $\sigma_{i+1}$ are nonsingular.
We may assume that $i=1,
v_1=\left(\begin{array}{c} 1 \\ 0 \end{array}\right),
v_2=\left(\begin{array}{c} 0 \\ 1 \end{array}\right)$.
Then
\begin{equation*}
v_3=\left(\begin{array}{c} -1 \\ p \end{array}\right),
v_4=\left(\begin{array}{c} q \\ r \end{array}\right),
v_5=\left(\begin{array}{c} s \\ t \end{array}\right)
\end{equation*}
for some $p, q, r, s, t \in \mathbb{Z}$ with $\gcd(q, r)=\gcd(s, t)=1$.
Hence it suffices to show that for $p, q, r, s, t \in \mathbb{Z}$ with $\gcd(q, r)=\gcd(s, t)=1$,
the sequence
\begin{equation*}
v_1=\left(\begin{array}{c} 1 \\ 0 \end{array}\right),
v_2=\left(\begin{array}{c} 0 \\ 1 \end{array}\right),
v_3=\left(\begin{array}{c} -1 \\ p \end{array}\right),
v_4=\left(\begin{array}{c} q \\ r \end{array}\right),
v_5=\left(\begin{array}{c} s \\ t \end{array}\right)
\end{equation*}
go around the origin exactly once in this order
and the associated toric surface $X(\Delta)$
is a toric log del Pezzo surface with $|\mathrm{Sing}(X(\Delta))|=3$ if and only if
\begin{align*}
p & \leq 1,\\
r & \leq \min\{-1, -pq-2, q-pq-1, -pq+qt-rs+ps+t-1\},\\
t & \leq \min\{-2, -s-1, qt-rs+r-1\},\\
2 & \leq qt-rs.
\end{align*}

If $v_1, \ldots, v_5$ determine a complete fan $\Delta$, then we have
\begin{itemize}
\item $\sigma_1$ and $\sigma_2$ are nonsingular;
\item $f(2) \geq 1 \Leftrightarrow p \leq 1$;
\item $\sigma_3$ is singular if and only if $r \leq -pq-2$;
\item $f(3) \geq 1 \Leftrightarrow r \leq q-pq$;
\item $f(4) \geq 1 \Leftrightarrow r \leq -pq+qt-rs+ps+t-1$;
\item $\sigma_5$ is singular if and only if $t \leq -2$;
\item $f(1) \geq 1 \Leftrightarrow t \leq -s$;
\item $f(5) \geq 1 \Leftrightarrow t \leq qt-rs+r-1$;
\item $\sigma_4$ is singular if and only if $2 \leq qt-rs$.
\end{itemize}
Suppose that $X(\Delta)$ is a toric log del Pezzo surface with $|\mathrm{Sing}(X(\Delta))|=3$.
Since $t \leq -2$ and $v_5$ is primitive, we have $t \leq -s-1$.
If $r=q-pq$, then $q=\pm1$ since $v_4$ is primitive,
which contradicts that $r \leq -pq-2$. Hence $r \leq q-pq-1$.
It remains to show that $r \leq -1$.
If $p \leq 0$, then $r \leq -1$ holds obviously.
Assume $p=1$. Proposition \ref{intersection number} and $f(3)=1-r$ imply $r \leq 0$.
If $r=0$, then $q=-1$ since $v_4$ is primitive, which contradicts that $r \leq -pq-2$.
Hence $r \leq -1$.
Therefore we obtain the required inequalities.
The converse is obvious.

To prove the remaining part of the theorem, we need the following claim:

\begin{claim}\label{claim3}
If $X(\Delta)$ is a toric log del Pezzo surface
with $d \geq 5$ and $|\mathrm{Sing}(X(\Delta))|=3$,
then there is a sequence of toric log del Pezzo surfaces
\begin{equation*}
X(\Delta)=X_d\stackrel{\pi_d}{\longrightarrow}
X_{d-1}\stackrel{\pi_{d-1}}{\longrightarrow}\cdots\stackrel{\pi_7}{\longrightarrow}
X_6\stackrel{\pi_6}{\longrightarrow}X_5,
\end{equation*}
where $\pi_i$ is a blow-up of $X_{i-1}$ at a nonsingular $(\mathbb{C}^*)^2$-fixed point
for $6 \leq i \leq d$.
\end{claim}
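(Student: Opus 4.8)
The plan is to imitate the proof of Claim \ref{claim1} and argue by induction on $d$. The base case is $d=5$, for which the asserted chain is simply $X_5=X(\Delta)$ with no blow-ups to perform, so there is nothing to do.

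Now suppose $d\geq 6$. Since $|\mathrm{Sing}(X(\Delta))|=3$, exactly three of $\sigma_1,\ldots,\sigma_d$ are singular, hence at least $d-3\geq 3$ of them are nonsingular. I would distinguish two cases, according to whether some two \emph{consecutive} cones $\sigma_i,\sigma_{i+1}$ are both nonsingular. If such a pair exists, I claim $X(\Delta)$ is the blow-up of a toric log del Pezzo surface at a nonsingular $(\mathbb{C}^*)^2$-fixed point: otherwise Lemma \ref{RR} would force $\sigma_j$ to be singular for every $j\in\{1,\ldots,d\}\setminus\{i,i+1\}$, giving $|\mathrm{Sing}(X(\Delta))|\geq d-2\geq 4$, a contradiction. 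So there is a blow-down $\pi_d:X(\Delta)=X_d\to X_{d-1}$ at a nonsingular fixed point; as in the proof of Claim \ref{claim1}, $X_{d-1}$ is again a toric log del Pezzo surface, it has $d-1\geq 5$ rays, and $|\mathrm{Sing}(X_{d-1})|=3$ because $\pi_d$ merges two nonsingular cones into a nonsingular one and leaves every other cone unchanged. Applying the induction hypothesis to $X_{d-1}$ and prepending $\pi_d$ then yields the required chain.

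The remaining case is the one in which no two consecutive cones are nonsingular. Then the nonsingular cones are pairwise nonadjacent in the cycle $\sigma_1,\ldots,\sigma_d$, so $d\geq 2(d-3)$, which forces $d\leq 6$ and hence $d=6$; and after a cyclic relabelling of the rays we may assume $\mathrm{Sing}(X(\Delta))=\{\mathrm{orb}(\sigma_1),\mathrm{orb}(\sigma_3),\mathrm{orb}(\sigma_5)\}$. This configuration cannot be obtained by blowing up a nonsingular fixed point (no ray has both of its adjacent cones nonsingular), so to finish the claim one has to rule it out. I would do so as a $d=6$ analogue of Lemma \ref{SRSRS}: assuming such an $X(\Delta)$ exists, normalize the nonsingular cone $\sigma_2$ exactly as in the proof of Lemma \ref{SRSRS}, record the ensuing inequalities ($y_1\geq 2$, $x_4\geq 2$, and $f(i)\geq 1$ for all $i$ by Proposition \ref{intersection number}), and apply Lemma \ref{SRS}(1) at $i=2,4,6$ to obtain $\det(v_4,v_1)\geq 2$, $\det(v_6,v_3)\geq 2$ and $\det(v_2,v_5)\geq 2$ (so in particular $y_5\leq -2$). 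A case analysis on the signs of the remaining coordinates should then either produce a direct numerical contradiction, or exhibit a quadrilateral or pentagonal sub-polygon of $\mathrm{conv}\{v_1,\ldots,v_6\}$ whose associated toric log del Pezzo surface has exactly two singular points arranged in a pattern excluded by Theorem \ref{2singularities}, mirroring the three cases in the proof of Lemma \ref{SRSRS}.

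I expect this final step --- the nonexistence of the fully alternating singular pattern for $d=6$ --- to be the main obstacle, since it reduces to neither Lemma \ref{RR} nor Lemma \ref{SRS} by itself and requires its own case-by-case analysis; everything else is the same bookkeeping as in the proof of Claim \ref{claim1}.
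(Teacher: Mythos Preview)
Your overall induction and the reduction via Lemma \ref{RR} match the paper exactly, including the observation that the only case not directly handled by Lemma \ref{RR} is $d=6$ with the alternating singular pattern $\mathrm{Sing}(X(\Delta))=\{\mathrm{orb}(\sigma_1),\mathrm{orb}(\sigma_3),\mathrm{orb}(\sigma_5)\}$.

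The difference is in how that remaining case is dispatched. You propose to redo a coordinate-by-coordinate analysis in the style of Lemma \ref{SRSRS}, and you flag this as the ``main obstacle''. The paper avoids this entirely: after one application of Lemma \ref{SRS}(1) at $i=2$ (giving $\det(v_4,v_1)\geq 2$), it simply drops the vertex $v_6$ and considers the LDP-pentagon $Q=\mathrm{conv}\{v_1,\ldots,v_5\}$. In $X_Q$ the cones $\sigma_1,\sigma_2,\sigma_3,\sigma_4$ are unchanged (singular, nonsingular, singular, nonsingular) and the new cone runs from $v_5$ to $v_1$. If $\det(v_5,v_1)=1$, then $X_Q$ has exactly two singular points sitting at $\sigma_1$ and $\sigma_3$, which are \emph{nonadjacent}; but Theorem \ref{2singularities}(3) forces the two singular cones of a Picard-number-three surface to be adjacent, a contradiction. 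If $\det(v_5,v_1)\geq 2$, then $X_Q$ is a $d=5$ surface with the alternating pattern $\{\mathrm{orb}(\sigma_1),\mathrm{orb}(\sigma_3),\mathrm{orb}(\sigma_5)\}$, contradicting Lemma \ref{SRSRS} itself.

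So the step you identified as the main obstacle is in fact a two-line reduction to results already in hand; no new case analysis is needed. Your sketch is not wrong, but it leaves the hardest-looking part unfinished when a clean shortcut is available.
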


\begin{proof}[Proof of Claim \ref{claim3}]
We use induction on $d$.
It is obvious for $d=5$.
Assume $d=6$.
Assume for contradiction that $X(\Delta)$ cannot be obtained by blowing-up a toric log del Pezzo surface
at a nonsingular $(\mathbb{C}^*)^2$-fixed point.
If there exists $i$ such that $\sigma_i$ and $\sigma_{i+1}$ are nonsingular,
then the remaining two-dimensional cones are all singular by Lemma \ref{RR},
which contradicts that $|\mathrm{Sing}(X(\Delta))|=3$.
Hence if $\sigma_i$ is nonsingular,
then both $\sigma_{i-1}$ and $\sigma_{i+1}$ are singular.
We may assume that
$\mathrm{Sing}(X(\Delta))=\{\mathrm{orb}(\sigma_1), \mathrm{orb}(\sigma_3), \mathrm{orb}(\sigma_5)\}$.
By Lemma \ref{SRS} (1), we have $\det(v_4, v_1) \geq 2$.
We consider the LDP-polygon $\mathrm{conv}\{v_1, v_2, v_3, v_4, v_5\}$.
If $\det(v_5, v_1)=1$,
then this contradicts Theorem \ref{2singularities}.
Otherwise this contradicts Lemma \ref{SRSRS}.
Hence $X(\Delta)$ is the blow-up of a toric log del Pezzo surface
at a nonsingular $(\mathbb{C}^*)^2$-fixed point.

Assume $d \geq 7$.
We may assume that $\sigma_1$ and $\sigma_2$ are nonsingular.
If $X(\Delta)=X_d$ cannot be obtained by blowing-up a toric log del Pezzo surface
at a nonsingular $(\mathbb{C}^*)^2$-fixed point,
then $\sigma_3, \ldots, \sigma_d$ are all singular by Lemma \ref{RR},
which contradicts that $|\mathrm{Sing}(X(\Delta))|=3$.
Hence $v_2=v_1+v_3$.
Let $X_{d-1}$ be the toric log del Pezzo surface
associated to the fan formed from the sequence $v_1, v_3, v_4, \ldots, v_d$.
Then we have the blow-up $\pi_d:X_d \rightarrow X_{d-1}$.
By the hypothesis of induction,
there is a sequence of toric log del Pezzo surfaces
\begin{equation*}
X(\Delta)=X_d\stackrel{\pi_d}{\longrightarrow}
X_{d-1}\stackrel{\pi_{d-1}}{\longrightarrow}
X_{d-2}\stackrel{\pi_{d-2}}{\longrightarrow}\cdots\stackrel{\pi_7}{\longrightarrow}
X_6\stackrel{\pi_6}{\longrightarrow}X_5,
\end{equation*}
where $\pi_i$ is a blow-up of $X_{i-1}$ at a nonsingular $(\mathbb{C}^*)^2$-fixed point
for $6 \leq i \leq d$.
\end{proof}

Let $X'$ be a toric log del Pezzo surface of Picard number three with $|\mathrm{Sing}(X')|=3$,
and let $X$ be the blow-up of $X'$ at one nonsingular $(\mathbb{C}^*)^2$-fixed point.
Then $X$ is isomorphic to $X(\Delta)$, where $\Delta$ is the complete fan in $\mathbb{R}^2$
formed from the sequence
\begin{align*}
&v_1=\left(\begin{array}{c} 1 \\ 0 \end{array}\right),
v_2=\left(\begin{array}{c} 0 \\ 1 \end{array}\right),
v_3=\left(\begin{array}{c} -1 \\ p+1 \end{array}\right),\\
&v_4=\left(\begin{array}{c} -1 \\ p \end{array}\right),
v_5=\left(\begin{array}{c} q \\ r \end{array}\right),
v_6=\left(\begin{array}{c} s \\ t \end{array}\right)
\end{align*}
for $p, q, r, s, t \in \mathbb{Z}$ with
\begin{align*}
p & \leq 1,\\
r & \leq \min\{-1, -pq-2, q-pq-1, -pq+qt-rs+ps+t-1\},\\
t & \leq \min\{-2, -s-1, qt-rs+r-1\},\\
2 & \leq qt-rs
\end{align*}
and $\gcd(q, r)=\gcd(s, t)=1$.
By Claim \ref{claim3}, to prove the remaining part
it suffices to show that if $X(\Delta)$ is log del Pezzo,
then the blow-up of $X(\Delta)$ at any nonsingular $(\mathbb{C}^*)^2$-fixed point is not log del Pezzo.
Suppose that $X(\Delta)$ is log del Pezzo.
Proposition \ref{intersection number} and $f(4)=q+1$ imply $q \geq 0$.
Since $rs \leq qt-2 \leq -2$, we have $s \geq 1$.
The nonsingular $(\mathbb{C}^*)^2$-fixed points of $X(\Delta)$
are $\mathrm{orb}(\sigma_1), \mathrm{orb}(\sigma_2), \mathrm{orb}(\sigma_3)$.
However, we see that
\begin{align*}
&\det(v_6, v_1)+\det(v_1, v_1+v_2)+\det(v_1+v_2, v_6)=1-s\leq0,\\
&\det(v_2+v_3, v_3)+\det(v_3, v_4)+\det(v_4, v_2+v_3)=0,\\
&\det(v_2, v_3)+\det(v_3, v_3+v_4)+\det(v_3+v_4, v_2)=0.
\end{align*}
Hence the blow-up of $X(\Delta)$ at any nonsingular $(\mathbb{C}^*)^2$-fixed point is not log del Pezzo.
This completes the proof of Theorem \ref{3singularities}.
\end{proof}

Finally, we give a proposition that holds even for $|\mathrm{Sing}(X(\Delta))| \geq 4$.

\begin{proposition}
Let $X(\Delta)$ be a toric log del Pezzo surface.
Then there exists an integer $n$ with $0 \leq n \leq d$ such that
$\sigma_i$ is nonsingular for $1 \leq i \leq n$
and $\sigma_i$ is singular for $n+1 \leq i \leq d$,
possibly after reordering the primitive generators $v_1, \ldots, v_d$.
\end{proposition}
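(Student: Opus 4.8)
\textit{Proof proposal.} The statement is equivalent to the assertion that the set $\{i : \sigma_i \text{ is nonsingular}\}$ is a cyclic interval of $\{1,\dots,d\}$ (the empty set and the whole set included): a cyclic reindexing of $v_1,\dots,v_d$ then puts it in the form $\{1,\dots,n\}$. So the plan is to prove, by strong induction on $d=\rho(X(\Delta))+2$, that the nonsingular cones of $X(\Delta)$ occur consecutively around the origin. For $d=3$ every subset of a $3$-cycle is a cyclic interval. For $d=4$ the only non-intervals are $\{\sigma_1,\sigma_3\}$ and $\{\sigma_2,\sigma_4\}$; if $\sigma_1,\sigma_3$ were the nonsingular cones, then $\sigma_2,\sigma_4$ are the only singular cones, so $|\mathrm{Sing}(X(\Delta))|=2$, while applying Lemma \ref{SRS}(2) to the nonsingular cone $\sigma_1$ (whose neighbours $\sigma_4$ and $\sigma_2$ are singular) gives $|\mathrm{Sing}(X(\Delta))|\ge 3$, a contradiction. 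For $d=5$, if the nonsingular cones did not form a cyclic interval, then either some nonsingular cone has both neighbours singular while $|\mathrm{Sing}(X(\Delta))|=2$ (impossible by Lemma \ref{SRS}(2)), or, after reindexing, $\mathrm{Sing}(X(\Delta))=\{\mathrm{orb}(\sigma_1),\mathrm{orb}(\sigma_3),\mathrm{orb}(\sigma_5)\}$, which is excluded by Lemma \ref{SRSRS}.

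For the inductive step let $d\ge 6$. If $X(\Delta)$ is nonsingular, take $n=d$; so assume $X(\Delta)$ is singular. If $X(\Delta)$ is the blow-up of a toric log del Pezzo surface $X'$ at a nonsingular $(\mathbb{C}^*)^2$-fixed point, then $X'$ has $d-1$ cones, and by the induction hypothesis we may index the rays of $X'$ so that $\sigma'_1,\dots,\sigma'_m$ are nonsingular and the remaining cones are singular; passing to $X(\Delta)$ inserts one new ray into a nonsingular cone of $X'$, that is, into one of $\sigma'_1,\dots,\sigma'_m$, splitting it into two cones which are again nonsingular (an immediate determinant check, since inserting $v'+v''$ into a cone with $\det(v',v'')=1$ produces two cones of determinant $1$) and leaving every other cone unchanged; hence the nonsingular cones of $X(\Delta)$ still form a cyclic interval. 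If instead $X(\Delta)$ is singular but is not such a blow-up, then by Lemma \ref{RR} the presence of two consecutive nonsingular cones $\sigma_i,\sigma_{i+1}$ forces every other cone to be singular, so the nonsingular cones form a cyclic interval of length at most $2$; and if no two consecutive cones are nonsingular there is nothing to prove unless there are at least two nonsingular cones, each then having both neighbours singular.

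It remains to rule out that last configuration. Let $\sigma_i$ and $\sigma_j$ ($i\ne j$) be nonsingular with both neighbours singular. By Lemma \ref{SRS}(1) each of $\sigma_{i-1}\cup\sigma_i\cup\sigma_{i+1}$ and $\sigma_{j-1}\cup\sigma_j\cup\sigma_{j+1}$ covers more than a half-plane; these two unions cannot be disjoint (that would fill more than all of $\mathbb{R}^2$), and since $\sigma_i,\sigma_j$ are not adjacent this forces $|i-j|=2$ cyclically. After reindexing, $\sigma_2,\sigma_4$ are nonsingular and $\sigma_1,\sigma_3,\sigma_5$ are singular, with $d\ge 6$. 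By Lemma \ref{SRS}(1) again $\det(v_4,v_1)\ge 2$, so the arc from $v_1$ to $v_4$ (hence also the one spanned by $v_1,\dots,v_5$) exceeds $\pi$, whence the origin lies in the interior of $Q'=\mathrm{conv}\{v_1,\dots,v_5\}$; as the $v_i$ are primitive and in strictly convex position, $Q'$ is an LDP-polygon with five vertices. In its fan the cones over $(v_1,v_2),(v_2,v_3),(v_3,v_4),(v_4,v_5)$ are $\sigma_1,\sigma_2,\sigma_3,\sigma_4$ of $\Delta$, with pattern singular, nonsingular, singular, nonsingular, and there is one new cone over $(v_5,v_1)$; so the nonsingular cones of $X_{Q'}$ are not all consecutive. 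Moreover $f(2),f(3),f(4)$ for $Q'$ equal $f(2),f(3),f(4)$ for $\Delta$, which are $\ge 1$. Granting that $X_{Q'}$ is log del Pezzo, it has either three singular points — forbidden by Lemma \ref{SRSRS} — or exactly two, in which case Theorem \ref{2singularities} forces the five-vertex polygon $Q'$ to be equivalent to the one in part (3) of that theorem, whose two singular cones are adjacent; this contradicts the fact that the singular cones of $X_{Q'}$ are non-consecutive. Hence the last configuration cannot occur, and the induction is complete.

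The main obstacle is precisely the parenthetical ``$X_{Q'}$ is log del Pezzo'': the three inherited values of $f$ cost nothing, but the two new inequalities $f(1)\ge 1$ and $f(5)\ge 1$ for $Q'$ must be checked, and they do not hold for every choice of $Q'$. One expects to obtain them from the determinant estimates of Lemma \ref{SRS}(1) in the generic case and, in the borderline cases, to replace $v_5$ (or $v_1$) by an interior lattice point of the segment joining it to its neighbour, exactly the vertex-shrinking device used in the proof of Lemma \ref{SRSRS}, and then re-run the argument. The remaining ingredients — the blow-up reduction, the appeal to Lemma \ref{RR}, and the cases $d\le 5$ — are routine.
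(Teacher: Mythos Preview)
The ``main obstacle'' you identify is not an obstacle at all: once $Q'=\mathrm{conv}\{v_1,\dots,v_5\}$ is an LDP-polygon (which you have already verified --- the $v_i$ are primitive, in strictly convex position, and the origin lies in the interior), $X_{Q'}$ is \emph{automatically} a toric log del Pezzo surface. This is exactly the content of Proposition~\ref{LDP-polygon}. If you prefer to see it via Proposition~\ref{intersection number}: a direct expansion shows
\[
f(i)=\det(v_{i-1},v_i)+\det(v_i,v_{i+1})+\det(v_{i+1},v_{i-1})=2\cdot\mathrm{Area}\bigl(v_{i-1},v_i,v_{i+1}\bigr),
\]
which is a positive integer whenever $v_{i-1},v_i,v_{i+1}$ are lattice points in strictly convex position. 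Hence $f(1)\ge 1$ and $f(5)\ge 1$ for $Q'$ come for free, no vertex-shrinking device needed. With this observation your argument is complete.

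Your route is close in spirit to the paper's, with some structural differences. The paper first reduces to the case where $X(\Delta)$ is not a blow-up at a nonsingular fixed point (you fold this reduction into the inductive step instead); it disposes of $d\le 5$ by citing Theorems~\ref{2singularities} and~\ref{3singularities} rather than re-deriving the small cases; it treats $d=6$ by a separate case split on $|\mathrm{Sing}(X(\Delta))|$; and for $d\ge 7$ it removes a \emph{single} vertex, passing to the LDP-polygon $\mathrm{conv}\{v_1,\dots,v_{d-1}\}$ and invoking the induction hypothesis, whereas you drop all the way to five vertices and appeal directly to Lemma~\ref{SRSRS} and Theorem~\ref{2singularities}. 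Both arguments rest on the same ingredients --- Lemmas~\ref{SRS} and~\ref{RR}, Lemma~\ref{SRSRS}, and (crucially, and used just as tacitly in the paper) the LDP-polygon correspondence of Proposition~\ref{LDP-polygon}.
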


\begin{proof}
We may assume
that $X(\Delta)$ cannot be obtained by blowing-up a toric log del Pezzo surface
at a nonsingular $(\mathbb{C}^*)^2$-fixed point.
If $d \leq 5$, then the assertion follows from Theorems \ref{2singularities} and \ref{3singularities}.
Assume $d=6$.
If $|\mathrm{Sing}(X(\Delta))| \geq 5$,
then the assertion is obvious.
If $|\mathrm{Sing}(X(\Delta))| \leq 3$,
then the assertion follows from Theorems \ref{2singularities} and \ref{3singularities}.
Hence we may further assume that $|\mathrm{Sing}(X(\Delta))|=4$
and $\sigma_2$ is nonsingular.
Assume for contradiction that the other two-dimensional nonsingular cone
is one of $\sigma_4, \sigma_5, \sigma_6$.

{\it Case 1}. Suppose that either $\sigma_4$ or $\sigma_6$ is nonsingular.
We may assume that $\sigma_4$ is nonsingular.
By Lemma \ref{SRS} (1), we have $\det(v_4, v_1) \geq 2$.
We consider the LDP-polygon $Q=\mathrm{conv}\{v_1, v_2, v_3, v_4, v_5\}$.
If $\det(v_5, v_1)=1$, then this contradicts Theorem \ref{2singularities}.
Otherwise this contradicts Theorem \ref{3singularities}.

{\it Case 2}. Suppose that $\sigma_5$ is nonsingular.
By Lemma \ref{SRS} (1),
the cones $\sigma_1, \sigma_2, \sigma_3$ cover more than a half-plane.
Similarly, $\sigma_4, \sigma_5, \sigma_6$ cover more than a half-plane.
This is a contradiction.

Hence the other two-dimensional nonsingular cone is either $\sigma_1$ or $\sigma_3$.
Therefore the assertion holds for $d=6$.

We prove the assertion for $d \geq 7$. We use induction on $d$.
If there exists $i$ such that $\sigma_i$ and $\sigma_{i+1}$ are nonsingular,
then the remaining two-dimensional cones are all singular by Lemma \ref{RR}.
Hence we may assume that
if $\sigma_i$ is nonsingular,
then both $\sigma_{i-1}$ and $\sigma_{i+1}$ are singular.
Assume for contradiction that there are $i$ and $j$ with $1 \leq i<j \leq d$ and $j \geq i+2$
such that $\sigma_i$ and $\sigma_j$ are nonsingular.
By Lemma \ref{SRS} (1), we must have $j=i+2$.
We may further assume that $i=2$. Then $\det(v_4, v_1) \geq 2$ by Lemma \ref{SRS} (1).
We consider the LDP-polygon $Q=\mathrm{conv}\{v_1, \ldots, v_{d-1}\}$.
We have $\det(v_1, v_2) \ne 1$ and $\det(v_3, v_4) \ne 1$ while $\det(v_2, v_3)=\det(v_4, v_5)=1$,
which contradicts the hypothesis of induction.
Hence $|\mathrm{Sing}(X(\Delta))|=d-1$.
Therefore the assertion holds for $d$.
This completes the proof.
\end{proof}

\end{document}